\newtheorem{lemma}{Lemma}[section]
\newtheorem{remark}[lemma]{Remark}
\newtheorem{theorem}[lemma]{Theorem}
\newtheorem{corollary}[lemma]{Corollary}
\newtheorem{definition}[lemma]{Definition}
\newtheorem{proposition}[lemma]{Proposition}
\author{Daniel Robert-Nicoud}
\date{}
\title{Representing the deformation $\infty$-groupoid}
\address{Laboratoire Analyse, G\'eom\'etrie et Applications, Universit\'e Paris 13, Sorbonne Paris Cit\'e, 99 Avenue Jean Baptiste Cl\'ement, 93430 Villetaneuse, France}
\email{daniel.robertnicoud@gmail.com}
\renewcommand{\bar}{\ensuremath{\mathrm{B}}}
\newcommand{\C}{\ensuremath{\mathscr{C}}}
\newcommand{\com}{\ensuremath{\mathit{Com}}}
\newcommand{\del}{\ensuremath{\mathrm{Del}}}
\newcommand{\g}{\ensuremath{\mathfrak{g}}}
\newcommand{\h}{\ensuremath{\mathfrak{h}}}
\newcommand{\id}{\ensuremath{\mathrm{id}}}
\newcommand{\lie}{\ensuremath{\mathit{Lie}}}
\newcommand{\mc}{\ensuremath{\mathfrak{mc}}}
\newcommand{\MC}{\ensuremath{\mathrm{MC}}}
\renewcommand{\k}{\ensuremath{\mathbb{K}}}
\renewcommand{\L}{\ensuremath{\mathscr{L}}}
\renewcommand{\P}{\ensuremath{\mathscr{P}}}
\newcommand{\Q}{\ensuremath{\mathscr{Q}}}
\newcommand{\rect}{R_\bullet}
\renewcommand{\S}{\ensuremath{\mathbb{S}}}
\newcommand{\susp}{\ensuremath{\mathscr{S}}}
\newcommand{\sSet}{\ensuremath{\mathsf{sSet}}}
\newcommand{\adjunction}{\@ifstar\named@adjunction\normal@adjunction}
\newcommand{\normal@adjunction}[4]{%
  #1\colon #2%
  \mathrel{\vcenter{%
    \offinterlineskip\m@th
    \ialign{%
      \hfil$##$\hfil\cr
      \longrightharpoonup\cr
      \noalign{\kern-.3ex}
      \smallbot\cr
      \longleftharpoondown\cr
    }%
  }}%
  #3 \noloc #4%
}
\newcommand{\named@adjunction}[4]{%
  #2%
  \mathrel{\vcenter{%
    \offinterlineskip\m@th
    \ialign{%
      \hfil$##$\hfil\cr
      \scriptstyle#1\cr
      \noalign{\kern.1ex}
      \longrightharpoonup\cr
      \noalign{\kern-.3ex}
      \smallbot\cr
      \longleftharpoondown\cr
      \scriptstyle#4\cr
    }%
  }}%
  #3%
}
\newcommand{\longrightharpoonup}{\relbar\joinrel\rightharpoonup}
\newcommand{\longleftharpoondown}{\leftharpoondown\joinrel\relbar}
\newcommand\noloc{%
  \nobreak
  \mspace{6mu plus 1mu}
  {:}
  \nonscript\mkern-\thinmuskip
  \mathpunct{}
  \mspace{2mu}
}
\newcommand{\smallbot}{%
  \begingroup\setlength\unitlength{.15em}%
  \begin{picture}(1,1)
  \roundcap
  \polyline(0,0)(1,0)
  \polyline(0.5,0)(0.5,1)
  \end{picture}%
  \endgroup
}
\subjclass[2010]{Primary 17B55; Secondary 18G55, 55U10}
\keywords{Deformation theory, Deligne groupoid, differential graded Lie algebras, Maurer--Cartan elements}
\thanks{The author was supported by grants from R\'egion Ile-de-France, and the grant ANR-14-CE25-0008-01 project SAT}
\begin{document}

\begin{abstract}
	The goal of the present paper is to introduce a smaller, but equivalent version of the deformation $\infty$-groupoid associated to a homotopy Lie algebra. In the case of differential graded Lie algebras, we represent it by a universal cosimplicial object.
\end{abstract}

\maketitle

\setcounter{tocdepth}{1}

\tableofcontents

\section{Introduction}

The \emph{fundamental principle of deformation theory}, due to Deligne, Grothendieck and many others and recently formalized and proved in the context of $\infty$-categories by Pridham and Lurie, states that

\medskip

\begin{center}
	\begin{minipage}{0.8\textwidth}
		\begin{center}
			``Every deformation problem in characteristic $0$ is encoded in the space of Maurer--Cartan elements of a differential graded Lie algebra.''
		\end{center}
	\end{minipage}
\end{center}

\medskip

Therefore, one is naturally led to the study of Maurer--Cartan elements of differential graded Lie algebras and, more generally, homotopy Lie algebras.

\medskip

In order to encode the Maurer--Cartan elements, gauge equivalences between them, and higher relations between gauge equivalences, Hinich \cite{hinich97} introduced the Deligne--Hinich $\infty$-groupoid. It is a Kan complex associated to any complete $\L_\infty$-algebra modeling the space of its Maurer--Cartan elements. Since it is a very big object, Getzler introduced in \cite{getzler09} a smaller but weakly equivalent Kan complex $\gamma_\bullet$ which, however, is more difficult to manipulate. In this paper, we introduce another simplicial set associated to any $\L_\infty$-algebra of which we prove the following nice properties:
\begin{enumerate}
	\item it is weakly equivalent to the Deligne--Hinich $\infty$-groupoid,
	\item it is a Kan complex,
	\item it is contained in the Getzler $\infty$-groupoid $\gamma_\bullet$, and
	\item if we restrict to the category of complete dg Lie algebras, there is an explicit cosimplicial dg Lie algebra $\mc_\bullet$ representing this object.
\end{enumerate}
The cosimplicial dg Lie algebra $\mc_\bullet$ was already introduced in the works of Buijs--Murillio--F\'elix--Tanr\'e \cite{buijs15} in the context of rational homotopy theory. We show here that it plays a key role in deformation theory.

\medskip

Results coming from operad theory play a crucial role throughout the paper, especially in the second part. In particular, we use the explicit formul{\ae} for the $\infty$-morphisms induced by the Homotopy Transfer Theorem given in \cite{vallette12} and various theorems proven in \cite{rn17tensor}.

\medskip

Little after the apparition of the present article, Buijs--Murillio--F\'elix--Tanr\'e gave an alternative proof of Corollary \ref{cor:exMainThm} in \cite{buijs17}. Their proof doesn't rely on general operadic results, but rather on explicit combinatorial computations.

\medskip

The author was made aware by Marco Manetti in a private conversation that many of the results of this article are already present in the unpublished PhD thesis \cite{bandiera} of his student Ruggero Bandiera (now also appeared in the article \cite{bandiera17}). We acknowledge this, but we consider that the present article remains interesting in that the methods used to prove the results are different. In particular, in view of Bandiera's results, Sections \ref{sect:mainThm} and \ref{sect:properties} can be interpreted as an alternative construction of the Getzler $\infty$-groupoid $\gamma_\bullet$ with new proofs of its properties.

\subsection*{Structure of the paper}

In Section \ref{sect:DHGgpd} we give a short review of the Deligne groupoid, the Deligne--Hinich $\infty$-groupoid, and the main theorems in this context. In Section \ref{sect:mainThm} we state and prove our main theorem, giving a new simplicial set encoding the Maurer--Cartan space of $\L_\infty$-algebras. Next, in Section \ref{sect:properties}, we study some properties of this object. In particular, we prove that it is a Kan complex, and that it is ``small'' in a precise sense. Finally, we focus on the special case of dg Lie algebras in Section \ref{sect:LieAlgebras}, showing that our Kan complex is represented by a cosimplicial dg Lie algebra in this situation.

\subsection*{Notation and conventions}

We work over a fixed field $\k$ of characteristic $0$.

\medskip

We abbreviate ``differential graded'' by dg, and sometimes omit it completely. All algebras are differential graded unless stated otherwise.

\medskip

Since we work with differential forms, we adopt the cohomological convention. Therefore, we work over cochain complexes, and Maurer--Cartan elements of dg Lie and $\L_\infty$-algebras (i.e. homotopy Lie algebras) are in degree $1$, not $-1$. All cochain complexes are $\mathbb{Z}$-graded.

\medskip

We use the letter $s$ to denote a formal element of degree $1$. If $C_\bullet$ is a cochain complex, then $sC_\bullet$ denotes the suspension of $C_\bullet$, which is sometimes written as $C_\bullet[1]$.

\medskip

We sometimes denote the identity maps by $1$.

\medskip

By a filtered $\L_\infty$-algebra we mean a pair $(\g,F_\bullet\g)$ where $\g$ is an $\L_\infty$-algebra and $F_\bullet\g$ is a descending filtration of $\g$ such that $F_1\g = \g$ and
\begin{enumerate}
	\item for all $n\ge1$, we have $d_\g(F_n\g)\subseteq F_n\g$,
	\item for all $k\ge2$ and $n_1,\ldots,n_k\ge1$ we have
	\[
	\ell_k(F_{n_1}\g,\ldots,F_{n_k}\g)\subseteq F_{n_1+\cdots+n_k}\g\ ,
	\]
	and
	\item the $\L_\infty$-algebra $\g$ is complete with respect to the filtration, i.e.
	\[
	\g\cong\varprojlim_n\g/F_n\g
	\]
	as $\L_\infty$-algebras.
\end{enumerate}
When the context is clear, we write $\g^{(n)}\coloneqq\g/F_n\g$. For details about (filtered) $\L_\infty$-algebras and the definitions and basic properties about (filtered) $\infty$-morphisms we refer the reader to the article \cite{dolgushev15}.

\subsection*{Acknowledgments}

I thank Ezra Getzler, Marco Manetti, Chris Rogers and Jim Stasheff for their useful comments, both editorial and mathematical. I am naturally also extremely grateful to my advisor Bruno Vallette for all the help, support and continuous discussion. I am thankful to Yangon, Don Mueang, and Narita international Airports for the almost pleasant working atmosphere they provided, as the bulk of this paper was typed in between flights.

\section{The deformation \texorpdfstring{$\infty$}{oo}-groupoid} \label{sect:DHGgpd}

An object of fundamental interest in deformation theory is the \emph{Deligne groupoid} $\del(\g)$ associated to a complete dg Lie algebra $\g$. There is a higher generalization of the Deligne groupoid in the form of the Deligne--Hinich $\infty$-groupoid. It is a simplicial set with nice properties and whose $1$-truncation gives back the Deligne groupoid. It was introduced in \cite{hinich97} and then studied in depth and further generalized in \cite{getzler09}.

\subsection{The Deligne groupoid}

Let $\g$ be a dg Lie algebra. Then we can associate a groupoid $\del(\g)$ to $\g$, called the \emph{Deligne groupoid}, as follows. The objects of the Deligne groupoid are the \emph{Maurer--Cartan elements} of $\g$, i.e. the degree $1$ elements $\alpha\in\g^1$ satisfying the Maurer--Cartan equation
\[
d\alpha + \frac{1}{2}[\alpha,\alpha] = 0\ .
\]

\begin{definition}
	The set of Maurer--Cartan elements of $\g$ is denoted by $\MC(\g)$.
\end{definition}

We have the set of objects of $\del(\g)$, we still need to define its morphisms. To an element $\lambda\in\g^0$, one can associate a ``vector field'' by sending $\alpha\in\g^1$ to
\[
d\lambda + [\lambda,\alpha]\in\g^1.
\]
It is tangent to the Maurer--Cartan locus, in the sense that if $\alpha(t)$ is the flow of $\lambda$, that is
\[
\dot{\alpha}(t) = d\lambda + [\lambda,\alpha(t)]
\]
with $\alpha(0)\in\MC(\g)$, then $\alpha(t)\in\MC(\g)$ for all $t$, whenever it exists. We say that two Maurer--Cartan elements $\alpha_0,\alpha_1\in\MC(\g)$ are \emph{gauge equivalent} if there exists such a flow $\alpha(t)$ such that $\alpha(i) = \alpha_i$ for $i=0,1$. The Deligne groupoid is the groupoid associated to this equivalence relation, which means that the morphisms are
\[
\del(\g)(\alpha_0,\alpha_1) \coloneqq \{\lambda\in\g^0\mid\text{the flow of }\lambda\text{ starting at }\alpha_0\text{ gives }\alpha_1\text{ at time }1\}\ .
\]
For further reference, see for example \cite{goldman88}.

\medskip

The assignment of the Deligne groupoid to a dg Lie algebra is functorial and has a good homotopical behavior: it sends filtered quasi-isomorphisms to equivalences, as can be seen by the Goldman-Millson theorem, which was first proven in \cite{goldman88}, and then generalized e.g. in \cite{yekutieli12}.

\subsection{Generalization: the deformation $\infty$-groupoid}

Let $\g$ be a nilpotent $\L_\infty$-algebra. The Maurer--Cartan equation can be generalized to
\[
dx + \sum_{n\ge2}\frac{1}{n!}\ell_n(x,\ldots,x) = 0
\]
for $x\in\g^1$. Again, we denote by $\MC(\g)$ the set of all elements satisfying this equation.

\begin{remark}
	Notice that the condition that $\g$ be nilpotent is sufficient to make it so that the left-hand side of the Maurer--Cartan equation is well defined.
\end{remark}

\subsubsection{The Deligne--Hinich $\infty$-groupoid}

\begin{definition}
	The \emph{Sullivan algebra} is the simplicial dg commutative algebra
	\[
	\Omega_n\coloneqq \k[t_0,\ldots,t_n,dt_0,\ldots,dt_n]/\left(\sum_{i=0}^nt_i = 1,\sum_{i=0}^ndt_i = 0\right)
	\]
	with $|t_i|=0$ and endowed with the unique differential satisfying $d(t_i) = dt_i$.
\end{definition}

This object was introduced by Sullivan in the context of rational homotopy theory \cite{sullivan77}. At level $n$, it is the algebra of polynomial differential forms on the standard geometric $n$-simplex. Now let $\g$ be a nilpotent $\L_\infty$-algebra. Then tensoring $\g$ with $\Omega_n$ gives us back a nilpotent $\L_\infty$-algebra, of which we can consider the Maurer--Cartan elements.

\begin{definition}
	The \emph{Deligne--Hinich $\infty$-groupoid} is the simplicial set
	\[
	\MC_\bullet(\g)\coloneqq\MC(\g\otimes\Omega_\bullet)\ .
	\]
	This association is natural in $\g$, and thus defines a functor
	\[
	\MC_\bullet:\{\text{nilpotent }\L_\infty\text{-algebras}\}\longrightarrow\sSet\ .
	\]
\end{definition}

We will rather consider the following slight generalization: Let $(\g,F_\bullet\g)$ be a filtered $\L_\infty$-algebra, then
\[
\g\cong\varprojlim_n\g/F_n\g
\]
is the limit of a sequence of nilpotent $\L_\infty$-algebras. Thus we can define
\[
\MC_\bullet(\g)\coloneqq\varprojlim_n\MC_\bullet(\g/F_n\g)\ .
\]
Notice that the elements in $\MC_\bullet(\g)$ in this case are not polynomials with coefficients in $\g$ anymore, but rather power series with some ``vanishing at infinity'' conditions. We state all the following results in this setting.

\begin{theorem} \label{thm:Hinich}
	Let either:
	\begin{itemize}
		\item \cite[Prop. 4.7]{getzler09}: $\g,\h$ be nilpotent $\L_\infty$-algebras and $\Phi:\g\to\h$ be a surjective strict morphism of $\L_\infty$-algebras, or
		\item \cite[Thm. 2]{rogers16}: $\g,\h$ be filtered $\L_\infty$-algebras and $\Phi:\g\rightsquigarrow\h$ be a filtered $\infty$-morphism that induces a surjection at every level of the filtrations.
	\end{itemize}
	 Then
	\[
	\MC_\bullet(\Phi):\MC_\bullet(\g)\longrightarrow\MC_\bullet(\h)
	\]
	is a fibration of simplicial sets. In particular, for any filtered $\L_\infty$-algebra $\g$, the simplicial set $\MC_\bullet(\g)$ is a Kan complex.
\end{theorem}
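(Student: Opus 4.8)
The two displayed statements are precisely \cite[Prop.~4.7]{getzler09} and \cite[Thm.~2]{rogers16}, so the plan is to record how one argues the fibration property and then to extract the Kan condition as the special case of the terminal map. Unwinding the definitions, $\MC_\bullet(\Phi)$ being a fibration amounts to a horn-filling problem: given a horn $\Lambda^n_k\to\MC_\bullet(\g)$ together with an $n$-simplex of $\MC_\bullet(\h)$ extending its image, I must produce a compatible $n$-simplex of $\MC_\bullet(\g)$. Writing $\Omega_{\Lambda^n_k}$ for the polynomial forms on the horn, this is exactly the surjectivity on Maurer--Cartan elements of the comparison map
\[
\g\otimes\Omega_n\longrightarrow \bigl(\g\otimes\Omega_{\Lambda^n_k}\bigr)\times_{\h\otimes\Omega_{\Lambda^n_k}}\bigl(\h\otimes\Omega_n\bigr),
\]
whose two structure maps are restriction to the horn and application of $\Phi$.

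The first thing I would establish is that this comparison map is a surjection of complete $\L_\infty$-algebras whose kernel is acyclic. Restriction $\Omega_n\to\Omega_{\Lambda^n_k}$ is surjective because a polynomial form on a horn extends to the simplex, and since both $\Delta^n$ and $\Lambda^n_k$ are contractible the kernel $K$ sits in a short exact sequence forcing $H^\bullet(K)=0$; over a field $K$ is therefore contractible. Using that $\Phi\colon\g\to\h$ is surjective (strictly, in the first bullet), a short diagram chase shows the comparison map is surjective with kernel $\ker(\Phi)\otimes K$, which is acyclic by K\"unneth. I would then invoke the standard Maurer--Cartan lifting principle: a strict surjection of complete $\L_\infty$-algebras with acyclic kernel induces a surjection on $\MC$, proved by obstruction theory along the tower of quotients $\g^{(n)}$, the successive obstructions lying in the vanishing cohomology of the kernel. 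This produces the required horn filler.

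The main obstacle is the second bullet, where $\Phi$ is only an $\infty$-morphism and the naive kernel $\ker\Phi$ has no meaning, so the clean ``acyclic kernel'' argument breaks down. Here I would not re-derive the result but defer to \cite[Thm.~2]{rogers16}, whose proof replaces the kernel argument by a direct analysis of the associated convolution/transferred $\L_\infty$-structure together with the filtration-wise surjectivity hypothesis; this is genuinely the hard analytic input, and it is the reason the statement is split into two cases.

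Finally, the Kan condition follows formally. Taking $\h=0$ and $\Phi\colon\g\to 0$ the canonical map --- which is a surjective strict morphism, hence covered by the first bullet --- the map $\MC_\bullet(\g)\to\MC_\bullet(0)$ is a fibration. Since $\MC(0\otimes\Omega_n)$ is a single point, $\MC_\bullet(0)=\ast$, and any simplicial set admitting a fibration to the point is a Kan complex. Hence $\MC_\bullet(\g)$ is a Kan complex for every complete $\L_\infty$-algebra $\g$.
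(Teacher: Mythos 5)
The paper offers no proof of this statement at all: it is recalled as background, with the two bullets attributed verbatim to \cite[Prop.~4.7]{getzler09} and \cite[Thm.~2]{rogers16}, and the only commentary is a historical remark tracing the result back to Hinich. So there is nothing in the paper to match your argument against; what you have written is a reconstruction of the classical Hinich-style proof of the first bullet. As such it is sound in outline: identifying the horn-filling condition with surjectivity of $\MC$ applied to the comparison map into the pullback, checking that this comparison map is a strict surjection of nilpotent $\L_\infty$-algebras with kernel $\ker(\Phi)\otimes K$ for $K$ the (acyclic) kernel of $\Omega_n\to\Omega_{\Lambda^n_k}$, and then invoking the obstruction-theoretic lifting lemma for surjections with acyclic kernel --- this is exactly how the strict case is proved in the literature, and your deferral to Rogers for the $\infty$-morphism case is the same move the paper makes. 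Two small points deserve attention. First, the ``Maurer--Cartan lifting principle'' you invoke is the genuinely nontrivial input in the strict case and should be cited or proved (it is Hinich's key lemma; the induction runs over the lower central series of the pullback, with obstructions killed by acyclicity of the kernel), and you should note that $\MC$ commutes with the pullback only because the morphisms involved are strict. Second, the ``in particular'' clause for a \emph{complete} (rather than nilpotent) $\g$ does not follow from taking $\h=0$ in the first bullet alone: one must either use the second bullet or observe that $\MC_\bullet(\g)=\varprojlim_n\MC_\bullet(\g^{(n)})$ is a limit of a tower of Kan fibrations between Kan complexes (the maps $\g^{(n+1)}\to\g^{(n)}$ being strict surjections of nilpotent $\L_\infty$-algebras), hence itself a Kan complex.
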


This result was originally proven by Hinich \cite[Th. 2.2.3]{hinich97} for strict surjections between nilpotent dg Lie algebras concentrated in positive degrees, and then generalized by E. Getzler and by C. L. Rogers to the version stated above.

\medskip

Generalizing the Goldman--Millson theorem, V. A. Dolgushev and C. L. Rogers proved in \cite[Thm. 2.2]{dolgushev15} that the Deligne--Hinich $\infty$-groupoid behaves well with respect to homotopy theory: it sends filtered quasi-isomorphisms of filtered $\L_\infty$-algebras to weak equivalences.

\subsubsection{Basic forms, Dupont's contraction and Getzler's functor $\gamma_\bullet$}

The Sullivan algebra has a subcomplex $C_\bullet$ linearly spanned by the \emph{basic forms}
\[
\omega_I\coloneqq k!\sum_{j=1}^k(-1)^jt_{i_j}dt_{i_0}\ldots\widehat{dt_{i_j}}\ldots dt_{i_k}\in\Omega_n
\]
for $I=\{i_0<i_1<\cdots<i_k\}\subseteq\{0,\ldots,n\}$. This is in fact the (co)cellular complex for the standard geometric $n$-simplex $\Delta^n$. In order to prove a simplicial version of the de Rham theorem, J. L. Dupont \cite{dupont76} introduced a homotopy retraction
\begin{center}
	\begin{tikzpicture}
		\node (a) at (0,0){$\Omega_\bullet$};
		\node (b) at (2,0){$C_\bullet$};
		
		\draw[->] (a)++(.3,.1)--node[above]{\mbox{\tiny{$p_\bullet$}}}+(1.4,0);
		\draw[<-,yshift=-1mm] (a)++(.3,-.1)--node[below]{\mbox{\tiny{$i_\bullet$}}}+(1.4,0);
		\draw[->] (a) to [out=-150,in=150,looseness=4] node[left]{\mbox{\tiny{$h_\bullet$}}} (a);
	\end{tikzpicture}
\end{center}
where all the maps are simplicial. Homotopy retraction means that we have
\[
p_\bullet i_\bullet = 1,\qquad\mathrm{and}\qquad 1-i_\bullet p_\bullet = dh_\bullet + h_\bullet d\ .
\]
Moreover, the maps satisfy the \emph{side conditions}
\[
h_\bullet i_\bullet = 0,\qquad p_\bullet h_\bullet = 0,\qquad\text{and}\qquad h_\bullet^2 = 0\ .
\]
A homotopy retraction satisfying the side conditions is called a \emph{contraction}.

\medskip

This contraction will be a fundamental ingredient in the rest of the present paper. As the Deligne--Hinich $\infty$-groupoid is always a big object, Getzler defined the following subobject.

\begin{definition}
	The \emph{Getzler $\infty$-groupoid} is the sub-simplicial set $\gamma_\bullet(\g)$ of $\MC_\bullet(\g)$ given by
	\[
	\gamma_n(\g) \coloneqq \{\alpha\in\MC_n(\g)\mid h_n\alpha = 0\}\ .
	\]
\end{definition}

\begin{theorem}[\cite{getzler09}]
	The simplicial set $\gamma_\bullet(\g)$ is a Kan complex, and it is weakly equivalent to the Deligne--Hinich $\infty$-groupoid $\MC_\bullet(\g)$.
\end{theorem}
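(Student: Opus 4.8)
The plan is to realize $\gamma_\bullet(\g)$ as the image of a simplicial \emph{gauge-fixing} retraction of $\MC_\bullet(\g)$ built from Dupont's contraction, and to deduce both assertions from the formal properties of this retraction. Tensoring $(i_\bullet,p_\bullet,h_\bullet)\colon\Omega_\bullet\rightleftarrows C_\bullet$ with the identity of $\g$ produces a contraction of the underlying cochain complexes $\g\otimes\Omega_n\rightleftarrows\g\otimes C_n$, whose operators I abbreviate $I=\id_\g\otimes i_n$, $P=\id_\g\otimes p_n$ and $H=\id_\g\otimes h_n$. The $\L_\infty$-structure lives on the source, so the Homotopy Transfer Theorem — in the explicit tree form of \cite{vallette12} — transfers it to $\g\otimes C_n$ and yields $\infty$-quasi-isomorphisms $i_\infty\colon\g\otimes C_n\rightsquigarrow\g\otimes\Omega_n$ and $p_\infty$ in the other direction, extending $I$ and $P$. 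The key point is that, for $\beta\in\MC(\g\otimes C_n)$, the transferred Maurer--Cartan element $\alpha=i_\infty(\beta)$ is the unique solution of the fixed-point equation \[\alpha=I\beta-H\sum_{k\ge2}\tfrac{1}{k!}\ell_k(\alpha,\ldots,\alpha),\] which converges because $\g$ is complete; applying $H$ and using the side conditions $h_\bullet i_\bullet=0$ and $h_\bullet^2=0$ shows $H\alpha=0$, so the image of $i_\infty$ on Maurer--Cartan sets lands in $\gamma_n(\g)$. Conversely $p_\infty$ maps $\gamma_n(\g)$ back, and the relation $p_\infty i_\infty=\id$ together with the uniqueness of the fixed point gives a bijection $\MC(\g\otimes C_n)\cong\gamma_n(\g)$. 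Since Dupont's maps are simplicial and the transfer is functorial, the composite $R\coloneqq i_\infty\circ p_\infty$ defines a \emph{simplicial} idempotent retraction $R\colon\MC_\bullet(\g)\to\gamma_\bullet(\g)$ with $R\circ\iota=\id_{\gamma_\bullet(\g)}$, where $\iota$ is the inclusion.

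Granting this, the Kan property of $\gamma_\bullet(\g)$ is immediate. Given a horn $\Lambda^n_k\to\gamma_\bullet(\g)$, I would compose with $\iota$, fill it inside $\MC_\bullet(\g)$ — which is a Kan complex by Theorem~\ref{thm:Hinich} — to obtain some $g\colon\Delta^n\to\MC_\bullet(\g)$, and then apply $R$. Because $R$ is simplicial it commutes with the face maps, and because $R\iota=\id$ it fixes the faces coming from the original horn; hence $R\circ g\colon\Delta^n\to\gamma_\bullet(\g)$ is the desired filler. The same mechanism in fact exhibits fillers in $\gamma_\bullet(\g)$ produced by a canonical procedure.

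For the weak equivalence it remains to promote the retraction to a deformation retraction, i.e. to exhibit a simplicial homotopy $\iota R\simeq\id_{\MC_\bullet(\g)}$; together with $R\iota=\id$ this makes $\iota$ a homotopy equivalence, a fortiori a weak equivalence. On each level $\iota R(\alpha)=i_\infty p_\infty(\alpha)$ has the same image under $p_\infty$ as $\alpha$, so the two Maurer--Cartan elements are gauge equivalent; explicitly, one is obtained from the other by integrating a gauge parameter extracted from the homotopy $H$. The content of the argument is to make this gauge equivalence natural in the simplicial variable, producing a single map $\MC_\bullet(\g)\times\Delta^1\to\MC_\bullet(\g)$ rather than a separate path in each degree.

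I expect this last step to be the main obstacle. It amounts to the homotopy invariance of the functor $\MC_\bullet$: homotopic $\infty$-morphisms — here $i_\infty p_\infty$ and $\id$ — should induce simplicially homotopic maps on Maurer--Cartan spaces. I would establish it either by invoking such an invariance statement, in the same circle of ideas as the Dolgushev--Rogers theorem cited above, or by integrating the gauge flow directly in the complete $\L_\infty$-algebra $\g\otimes\Omega_\bullet$, checking convergence through the canonical filtration and compatibility with faces and degeneracies via the simpliciality of $h_\bullet$. Everything else is formal once the contraction, the fixed-point description of $\gamma_\bullet(\g)$, and this homotopy are in place.
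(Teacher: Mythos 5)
Your overall skeleton --- realize $\gamma_\bullet(\g)$ as the image of the simplicial idempotent $\rect=I_\bullet P_\bullet$ and argue by retraction --- is exactly the point of view this paper takes (the statement itself is only cited from Getzler; the paper reproves it through Theorem \ref{thm:mainThm}, Lemma \ref{lemma:comparisonGamma} and Theorem \ref{thm:bandiera}). Two caveats on the Kan half. First, the inclusion $\mathrm{Im}(I_\bullet)\subseteq\gamma_\bullet(\g)$ via the side conditions is fine (this is Lemma \ref{lemma:comparisonGamma}). But the identity $\rect|_{\gamma_\bullet(\g)}=\id$, which your horn-filling argument needs in order to know that $\rect$ of a filler still restricts to the \emph{original} horn, is not a formal consequence of $P_\bullet I_\bullet=\id$ plus ``uniqueness of the fixed point'': you must also show that every $\alpha$ with $h_\bullet\alpha=0$ satisfying Maurer--Cartan actually solves the fixed-point equation with input $P\alpha$, and that the unique solution of that equation with an arbitrary degree-one input is Maurer--Cartan if and only if the input is Maurer--Cartan for the transferred structure. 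That is precisely Bandiera's Theorem \ref{thm:bandiera}, which the paper cites rather than proves; your one-line justification needs to be expanded into this argument.

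The genuine gap is in the weak-equivalence half, and you have correctly identified where it is --- but your proposed ways of closing it do not work as stated. The homotopy-invariance statement of Dolgushev--Rogers applies to maps of the form $\MC_\bullet(\Phi)=\MC((\Phi\otimes 1_{\Omega_\bullet}))$ for an $\infty$-morphism $\Phi\colon\g\rightsquigarrow\h$; the map $\rect$ is induced by an $\infty$-endomorphism of $\g\otimes\Omega_n$ for each fixed $n$, \emph{not} by one of $\g$ tensored with the identity of $\Omega_\bullet$, so that theorem cannot be invoked (the paper says this explicitly after Lemma \ref{lemma:PIis1}). Likewise, a levelwise gauge equivalence between $\alpha$ and $\rect(\alpha)$ in each $\g\otimes\Omega_n$ does not assemble into a map $\MC_\bullet(\g)\times\Delta^1\to\MC_\bullet(\g)$ without exactly the compatibility you defer, and no such simplicial homotopy is constructed anywhere in the paper. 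The paper's actual route is different: reduce to basepoint $0$ by twisting (Lemma \ref{lemma:shiftToZero}), prove the abelian case via Moore complexes (Lemma \ref{lemma:abelianDGL}), then induct on the quotients $\g^{(k)}$ of the canonical filtration using the Kan fibrations of Theorem \ref{thm:Hinich}, the long exact sequence in homotopy and the five lemma (Lemma \ref{lemma:inductionStep}), and finally pass to the limit via the model structure on towers of simplicial sets --- this last step also handles the fact that $\MC_\bullet$ of a complete (non-nilpotent) algebra is a limit, which your argument does not address. Without either this induction or an actual construction of the simplicial homotopy $\iota\rect\simeq\id$, the weak equivalence is not established.
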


A part of the definition of $h_\bullet$ and $p_\bullet$ which we will need in what follows is the (formal) integration of a form in the Sullivan algebra over a simplex, which is given by:
\[
\int_{\Delta^n}t_1^{a_1}\ldots t_n^{a_n}dt_1\ldots dt_n \coloneqq \frac{a_1!\cdots a_n!}{(a_1+\cdots+a_n+n)!}\ .
\]
It corresponds to the usual integration when working over $\k=\mathbb{R}$.

\begin{remark}
	We have
	\[
	\int_{\Delta^p}\omega_I = 1
	\]
	for $p + 1=|I|$, where $\Delta^p$ is the subsimplex of $\Delta^n$ with vertices indexed by $I$.
\end{remark}

\begin{definition}
	A form $\alpha\in\gamma_n(\g)$ is said to be \emph{thin} if
	\[
	\int_{\Delta^n}\alpha = 0\ .
	\]
\end{definition}

\begin{theorem}[\cite{getzler09}]
	For every horn in $\gamma_\bullet(\g)$, there exists a unique thin simplex filling it.
\end{theorem}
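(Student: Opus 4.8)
The statement is Getzler's, and the plan is to reduce it to a fixed-point problem governed by Dupont's contraction $(i_\bullet,p_\bullet,h_\bullet)$. First I would reduce to the nilpotent case: since $\g\cong\varprojlim_N\g^{(N)}$ and the contraction is natural, one has $\gamma_\bullet(\g)=\varprojlim_N\gamma_\bullet(\g^{(N)})$, and both existence and uniqueness of a thin filler survive this inverse limit (the unique fillers at each level assemble, by naturality, into a compatible system). So assume $\g$ nilpotent. Extend the contraction to $\g\otimes\Omega_n$ by $1\otimes h_n$ (still written $h$), and likewise $i,p$; since $h$ acts only on $\Omega_n$ it commutes past $d_\g$, so $1-ip=dh+hd$ holds for the total differential. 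For $\alpha\in\gamma_n(\g)$ the gauge condition $h\alpha=0$ gives $\alpha=ip\alpha+hd\alpha$, and substituting the Maurer--Cartan equation $d\alpha=-\sum_{m\ge2}\frac{1}{m!}\ell_m(\alpha,\ldots,\alpha)$ yields the fixed-point equation
$$\alpha = ip\alpha - h\Big(\sum_{m\ge2}\tfrac{1}{m!}\ell_m(\alpha,\ldots,\alpha)\Big).$$

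Next I would show that the cellular part $a:=ip\alpha$ is \emph{uniquely determined} by the horn and the thinness condition. Because $i_\bullet,p_\bullet,h_\bullet$ are simplicial, $\partial_j(p\alpha)=p(\partial_j\alpha)$, so writing $p\alpha=\sum_I\omega_I\otimes c_I$ each coefficient is the integral $c_I=\int_{\Delta_I}\alpha$ over the corresponding subsimplex. For every proper $I\neq[n]\setminus\{k\}$ the face $\Delta_I$ lies in at least one face of the horn, so $c_I$ is read off from the prescribed (compatible) horn data; the top coefficient $c_{[n]}=\int_{\Delta^n}\alpha$ is forced to $0$ by thinness. The only remaining freedom is $c_{[n]\setminus\{k\}}=\int_{\Delta^{n-1}}\partial_k\alpha$, and I would pin it down by the combinatorics of the horn: the cokernel of the inclusion of the basic-form complex of the horn $\Lambda^n_k$ into $C_n$ is spanned precisely by the two cells $\omega_{[n]}$ and $\omega_{[n]\setminus\{k\}}$, and the requirement that $ip\alpha$ be a cellular cocycle transverse to the horn --- equivalently, Stokes applied to $\int_{\Delta^n}d\alpha$ together with the $\L_\infty$-relations order by order in the filtration --- determines $c_{[n]\setminus\{k\}}$ from the remaining coefficients. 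Thus $a$ is uniquely fixed.

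Finally, with $a$ fixed I would set $\Phi(\alpha):=a-h\sum_{m\ge2}\frac{1}{m!}\ell_m(\alpha,\ldots,\alpha)$. Each $\ell_m$ with $m\ge2$ strictly raises the $\L_\infty$-filtration weight, so on the nilpotent $\g\otimes\Omega_n$ the map $\Phi$ is a contraction for the complete (ultrametric) topology and has a unique fixed point $\alpha$; since every thin filler is such a fixed point by the previous two steps, this yields uniqueness. For existence it remains to check that the fixed point genuinely lies in $\gamma_n(\g)$ and fills the horn: that $h\alpha=0$ is immediate from the side conditions $hi=0$ and $h^2=0$, and that $\partial_j\alpha=a_j$ for $j\neq k$ follows because $\Phi$ is compatible with faces and the horn data were built into $a$. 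The real obstacle is verifying that the fixed point satisfies the Maurer--Cartan equation, i.e. that gauge-fixing by $h$ is consistent with the $\L_\infty$-relations; this requires the Bianchi-type identity expressing $d\big(\sum_m\frac{1}{m!}\ell_m(\alpha^{\otimes m})\big)$ through the quadratic $\L_\infty$-relations and the vanishing of the spurious $ip$-component --- exactly the computation underlying the Homotopy Transfer Theorem. I expect this to be the most technical step, and I would carry it out weight by weight in the filtration, where at each stage it collapses to a linear identity in the contracted complex.
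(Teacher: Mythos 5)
The paper itself offers no proof of this statement: it is imported verbatim from Getzler \cite{getzler09}, so there is no in-paper argument to compare against. Your strategy --- gauge-fix with $h$, convert the Maurer--Cartan equation into a fixed-point equation, and contract using nilpotence --- is essentially Getzler's own, but as written it contains two genuine gaps.

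First, the step ``thus $a$ is uniquely fixed'' is circular. The coefficients $c_I$ for $I$ contained in a face of the horn are indeed read off from the horn data, and $c_{[n]}=0$ is thinness; but the remaining coefficient $c_{[n]\setminus\{k\}}=\int_{\Delta^{n-1}}\partial_k\alpha$ is \emph{not} determined by the horn and thinness alone. Stokes gives $\sum_j(-1)^j\int_{\Delta^{n-1}}\partial_j\alpha=\int_{\Delta^n}d\alpha=-\int_{\Delta^n}\sum_{m\ge2}\tfrac{1}{m!}\ell_m(\alpha,\ldots,\alpha)$, and the right-hand side depends on the unknown $\alpha$ on the interior of the whole simplex, not merely on the horn. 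So $a=ip\alpha$ cannot be pinned down before the iteration begins; the determination of $c_{[n]\setminus\{k\}}$ has to be folded into the fixed-point map itself (which is still a contraction, since the dependence passes through terms of strictly higher bracket weight). Your parenthetical ``order by order in the filtration'' suggests you see this, but the architecture you present --- fix $a$ once and for all, then iterate $\Phi(\alpha)=a-h(\cdots)$ with $a$ constant --- does not work as stated. Only in the abelian case does the right-hand side vanish so that $a$ is determined linearly; that is the correct base of the induction, not the general situation.

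Second, the existence half is not finished. The fixed point of $\Phi$ satisfies $h\alpha=0$ and has the prescribed cellular part, but nothing in your argument yet guarantees that it satisfies the Maurer--Cartan equation; this is precisely the content of Getzler's proof. One sets $F(\alpha)=d\alpha+\sum_{m\ge2}\tfrac{1}{m!}\ell_m(\alpha^{\otimes m})$, invokes the Bianchi identity $dF+\sum_{m\ge1}\tfrac{1}{m!}\ell_{m+1}(\alpha^{\otimes m},F)=0$ together with $hF=0$ (which does follow from the construction and the side conditions) and the vanishing of $F$ on the horn faces and of its remaining cellular component, and then kills $F$ by induction along the lower central series. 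Deferring this with ``I expect this to be the most technical step'' leaves the theorem unproved: once the first gap is repaired your argument does deliver uniqueness, but existence still rests entirely on this unexecuted computation.
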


\begin{remark}
	The existence of a set of thin simplices such that every horn has a unique thin filler is what is meant by Getzler when he speaks of an $\infty$-groupoid. We use the term simply to mean Kan complex (e.g. when speaking of the Deligne--Hinich $\infty$-groupoid).
\end{remark}

\section{Main Theorem} \label{sect:mainThm}

In this section, we give a reminder on the Homotopy Transfer Theorem for commutative and for $\L_\infty$-algebras, before going on to state and prove the main theorem of the present article.

\subsection{Reminder on the Homotopy Transfer Theorem}

Let $V,W$ be  cochain complexes, and suppose that we have a retraction
\begin{center}
	\begin{tikzpicture}
		\node (a) at (0,0){$V$};
		\node (b) at (2,0){$W\ ,$};
		
		\draw[->] (a)++(.3,.1)--node[above]{\mbox{\tiny{$p$}}}+(1.4,0);
		\draw[<-,yshift=-1mm] (a)++(.3,-.1)--node[below]{\mbox{\tiny{$i$}}}+(1.4,0);
		\draw[->] (a) to [out=-150,in=150,looseness=4] node[left]{\mbox{\tiny{$h$}}} (a);
	\end{tikzpicture}
\end{center}
that is, we have
\[
ip - 1 = dh + hd
\]
and $pi = 1$. Furthermore, we can always suppose that
\[
h^2 = 0,\qquad hi = 0,\qquad\text{and}\qquad ph = 0\ ,
\]
see e.g. \cite[p. 365]{ls87}. The Homotopy Transfer Theorem tells us that we can coherently transfer algebraic structures from $V$ to $W$. More precisely, the specific cases of interest to us are the following ones.

\begin{theorem}[Homotopy Transfer Theorem for commutative algebras]
	Suppose $V$ is a commutative algebra. There is a $\C_\infty$-algebra structure on $W$ such that $p$ and $i$ extend to $\infty$-quasi isomorphisms $p_\infty$ and $i_\infty$ of $\C_\infty$-algebras between $V$ and $W$ endowed with the respective structures.
\end{theorem}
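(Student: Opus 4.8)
The strategy is purely operadic: realize both the commutative structure on $V$ and the sought $\C_\infty$-structure on $W$ as Maurer--Cartan elements in convolution Lie algebras, then transfer them along the contraction. Recall that the operad $\com$ is Koszul, with Koszul dual cooperad $\com^{\antishriek}$, and that $\C_\infty$ is by definition the cobar construction $\Omega\com^{\antishriek}$. Consequently, a $\C_\infty$-algebra structure on a cochain complex $W$ is exactly a Maurer--Cartan element of the convolution (pre-)Lie algebra $\mathrm{Hom}_\S(\com^{\antishriek},\End_W)$, whose differential comes from the internal differential of $W$ and whose bracket is induced by the cooperad structure of $\com^{\antishriek}$ and the operadic composition of $\End_W$. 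The commutative algebra structure on $V$ is a Maurer--Cartan element $\varphi\in\mathrm{Hom}_\S(\com^{\antishriek},\End_V)$ supported on the differential of $V$ and on the binary cogenerator of $\com^{\antishriek}$.

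First I would write down the transferred structure by the usual tree formula. To each basis element of $\com^{\antishriek}$, indexed by a (reduced, symmetric) tree, one associates the composite $\End_V$-operation obtained by decorating the leaves with $i$, each internal vertex with the corresponding component of $\varphi$, each internal edge with the homotopy $h$, and the root with $p$. This defines a map $\psi\in\mathrm{Hom}_\S(\com^{\antishriek},\End_W)$. The content of the Homotopy Transfer Theorem is that $\psi$ is again a Maurer--Cartan element; its square-zero-ness follows from the homotopy retract identities $ip-1=dh+hd$ and $pi=1$, together with the side conditions $h^2=hi=ph=0$, which make the tree-level terms telescope and cancel in pairs. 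I would either verify this directly by the standard induction on the number of internal vertices, or---more efficiently---invoke the explicit operadic formul{\ae} of \cite{vallette12}, which produce precisely this $\psi$ and guarantee the Maurer--Cartan equation.

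Next I would construct the $\infty$-quasi-isomorphisms. An $\infty$-morphism of $\C_\infty$-algebras is a morphism of $\com^{\antishriek}$-coalgebras commuting with the codifferentials, hence again a solution of a Maurer--Cartan-type equation in a convolution algebra, and it too admits an explicit tree expansion. The morphism $i_\infty$ extends $i$ by decorating leaves with $i$, internal vertices with $\varphi$, and all internal edges \emph{and the root} with $h$; dually, $p_\infty$ extends $p$. One checks, again from the homotopy retract relations, that these are compatible with the two codifferentials, so they are genuine $\infty$-morphisms. Since their first components $i$ and $p$ are quasi-isomorphisms by hypothesis, $i_\infty$ and $p_\infty$ are $\infty$-quasi-isomorphisms, as required.

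The main obstacle is twofold. First, one must carry out the combinatorial bookkeeping of signs and of the cancellations coming from $dh+hd=ip-1$ in a way that respects the cosymmetry encoded by $\com^{\antishriek}$; working consistently with the cocommutative cooperad (rather than the coassociative one) throughout is essential, and this is precisely where the Koszulness of $\com$ enters, ensuring that $\C_\infty=\Omega\com^{\antishriek}$ is the correct homotopy resolution. Second, verifying the compatibility of $i_\infty$ and $p_\infty$ with the codifferentials requires the same telescoping argument as for $\psi$; once the tree formul{\ae} are set up correctly these reduce to routine (if lengthy) identities, so the real work lies in organizing the formalism rather than in any single hard estimate.
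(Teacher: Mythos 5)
Your proposal is correct and follows the same route the paper itself takes: the paper offers no independent proof but simply cites the operadic Homotopy Transfer Theorem of \cite[Sect.~10.3]{vallette12}, and your argument is exactly a summary of that proof (Maurer--Cartan elements in the convolution algebra $\mathrm{Hom}_\S(\com^{\antishriek},\End_W)$, the tree-wise transfer formula, and the explicit tree expansions of $i_\infty$ and $p_\infty$ from Sections~10.3.5--6, whose verification rests on the retract identities and side conditions). The only caveat worth noting is that your description of $p_\infty$ as simply ``dual'' to $i_\infty$ glosses over the fact that its explicit formula is not obtained by moving the single $h$ from the root to the leaves but requires the slightly more involved levelled-tree expression of loc.~cit.; since you defer to those formul{\ae} anyway, this does not affect correctness.
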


\begin{theorem}[Homotopy Transfer Theorem for $\L_\infty$-algebras]
	Suppose $V$ is an $\L_\infty$-algebra. There is an $\L_\infty$-algebra structure on $W$ such that $p$ and $i$ extend to $\infty$-quasi isomorphisms $p_\infty$ and $i_\infty$ of $\L_\infty$-algebras between $V$ and $W$ endowed with the respective structures.
\end{theorem}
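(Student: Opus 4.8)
The plan is to prove this via homological perturbation theory applied to the associated conilpotent cofree cocommutative coalgebras. Recall that an $\L_\infty$-algebra structure on a cochain complex $V$ is equivalent to a degree $1$, square-zero coderivation $D_V$ on the cofree conilpotent cocommutative coalgebra $\overline{S^c}(sV)$ generated by the suspension $sV$; the corestriction of $D_V$ to the cogenerators recovers the family of brackets $\ell_n$, and an $\infty$-morphism of $\L_\infty$-algebras is by definition a morphism of dg coalgebras between the corresponding coalgebras. Thus the theorem amounts to transferring the codifferential $D_V$ along the contraction and producing the two accompanying coalgebra morphisms.

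First I would promote the contraction $(i,p,h)$ on the level of complexes to a contraction
$$\overline{S^c}(sV)\ \rightleftarrows\ \overline{S^c}(sW)\ .$$
Here the two maps are the cofree coalgebra morphisms induced by $si$ and $sp$, while a homotopy $H$ on $\overline{S^c}(sV)$ is built from $h$ by the standard ``tensor trick'', inserting $h$ in one tensor factor and using $ip$ and the identity on the others so that $H$ remains a coderivation homotopy and the side conditions $H^2=0$, $H\cdot S^c(si)=0$, $S^c(sp)\cdot H=0$ are preserved. The internal differential $d$ of $V$ induces the unperturbed codifferential on both coalgebras, and I would regard the higher brackets of $V$ (including those present because $V$ is a genuine $\L_\infty$-algebra, not merely a strict one) as a perturbation $\delta\coloneqq D_V-d$ of this codifferential.

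The main step is then to apply the Homological Perturbation Lemma to this contraction with perturbation $\delta$. The lemma outputs a perturbed codifferential $D_W$ on $\overline{S^c}(sW)$, which by construction is again a square-zero coderivation and hence defines the transferred $\L_\infty$-structure on $W$, together with perturbed morphisms $P_\infty$ and $I_\infty$ of dg coalgebras lifting $p$ and $i$; these are precisely the desired $\infty$-morphisms $p_\infty$ and $i_\infty$, and unwinding the geometric series produces the familiar sums over rooted trees, with leaves decorated by $i$, internal edges by $h$, vertices by the brackets of $V$, and the root by $p$, matching the explicit formul{\ae} of \cite{vallette12}. Finally, since $pi=\id$ and $ip-1=dh+hd$, the linear components of $P_\infty$ and $I_\infty$ are $p$ and $i$, which are quasi-isomorphisms of the underlying complexes; as an $\infty$-morphism whose linear component is a quasi-isomorphism is an $\infty$-quasi-isomorphism, both $p_\infty$ and $i_\infty$ are $\infty$-quasi-isomorphisms.

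The main obstacle is verifying the hypotheses of the Homological Perturbation Lemma, namely that the composite $\delta H$ is locally nilpotent, so that the series $\sum_{k\ge0}(\delta H)^k$ defining the perturbed data converges. This is where the conilpotency of $\overline{S^c}(sV)$ is essential: filtering the coalgebra by its coradical filtration (equivalently, by the number of tensor factors), one checks that $\delta$ strictly raises this filtration while $H$ preserves it, so that $\delta H$ is nilpotent on each filtration quotient and the series is locally finite. Granting this, the remaining verifications, that $D_W$ squares to zero and that $P_\infty$ and $I_\infty$ are genuine coalgebra morphisms, are formal consequences of the perturbation lemma together with the coderivation property, and require only careful bookkeeping of Koszul signs.
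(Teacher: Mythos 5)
Your argument is correct in outline, but it takes a genuinely different route from the one the paper relies on: the paper gives no proof at all and instead points to \cite[Sect.~10.3]{vallette12}, where the Homotopy Transfer Theorem is established for algebras over an arbitrary Koszul operad by writing down the transferred structure and the $\infty$-morphisms explicitly as sums over decorated rooted trees (via a morphism of cooperads $\bar\End_W\to\bar\End_V$), whereas you obtain everything at once from the Homological Perturbation Lemma applied to the bar-construction coalgebras $\overline{S}^c(sV)\rightleftarrows\overline{S}^c(sW)$. The operadic route buys uniformity over all Koszul operads and the closed tree formul{\ae} that the paper actually uses later (in Proposition \ref{prop:forElModel} and Lemma \ref{lemma:comparisonGamma}); your route buys a conceptually cleaner convergence argument and the coalgebra morphisms for free, and after expanding the geometric series it recovers the same tree formul{\ae}, so the two are compatible. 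Two points in your write-up deserve more care. First, a sign-of-direction slip: the perturbation $\delta=D_V-d$ strictly \emph{lowers} the word-length (coradical) filtration, since $\ell_n$ takes $n$ cogenerators to one; it is precisely because $\delta$ lowers while $H$ preserves word length, and every element of a conilpotent coalgebra has bounded word length, that $(\delta H)^k$ vanishes locally --- if $\delta$ raised the filtration the series would not terminate in the direct sum $\overline{S}^c(sW)$. Second, the step you describe as ``formal bookkeeping'' is the real technical content of this proof: the tensor-trick homotopy $H$ is neither a coderivation nor a coalgebra map, and the basic Perturbation Lemma does not by itself guarantee that the perturbed differential is a coderivation or that the perturbed maps are coalgebra morphisms. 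One must either invoke the coalgebra perturbation lemma (Gugenheim--Lambe--Stasheff, Huebschmann--Kadeishvili), or symmetrize and correct $H$ so that the side conditions and the compatibility with the comultiplication hold --- this is where characteristic $0$ is genuinely used for the symmetric (as opposed to tensor) coalgebra. With those two repairs the proof is complete.
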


For details on this theorem, see e.g. \cite[Sect. 10.3]{vallette12}, where it is proven in the general context of algebras over operads. See also \cite[Sect. 10.3.5--6]{vallette12} for the explicit formul{\ae} for the $\infty$-morphisms $p_\infty$ and $i_\infty$.

\subsection{Statement of the main theorem}

Let $\g$ be a complete $\L_\infty$-algebra. The Dupont contraction induces a contraction
\begin{center}
	\begin{tikzpicture}
		\node (a) at (0,0){$\g\otimes\Omega_\bullet$};
		\node (b) at (2.8,0){$\g\otimes C_\bullet$};
		
		\draw[->] (a)++(.6,.1)--node[above]{\mbox{\tiny{$1\otimes p_\bullet$}}}+(1.6,0);
		\draw[<-,yshift=-1mm] (a)++(.6,-.1)--node[below]{\mbox{\tiny{$1\otimes i_\bullet$}}}+(1.6,0);
		\draw[->] (a) to [out=-160,in=160,looseness=4] node[left]{\mbox{\tiny{$1\otimes h_\bullet$}}} (a);
	\end{tikzpicture}
\end{center}
of $\g\otimes\Omega_\bullet$ to $\g\otimes C_\bullet$. Applying the Homotopy Transfer Theorem to this contraction, we obtain a simplicial $\L_\infty$-algebra structure on $\g\otimes C_\bullet$. We also know that we can extend the maps $1\otimes p_\bullet$ and $1\otimes i_\bullet$ to simplicial $\infty$-morphisms of simplicial $\L_\infty$-algebras $(1\otimes p_\bullet)_\infty$ and $(1\otimes i_\bullet)_\infty$. Notice that these $\infty$-morphisms are indeed simplicial because they are given by sums of compositions of copies of $1\otimes i_\bullet$, $1\otimes p_\bullet$, $1\otimes h_\bullet$, and the brackets of $\g\otimes\Omega_\bullet$, all of which respect the simplicial structure. We denote $P_\bullet$ and $I_\bullet$ the induced maps on Maurer--Cartan elements. We will also use the notation
\[
(1\otimes r_\bullet)_\infty \coloneqq (1\otimes i_\bullet)_\infty(1\otimes p_\bullet)_\infty\ ,
\]
and we dub $\rect$ the map induced by $(1\otimes r_\bullet)_\infty$ on Maurer--Cartan elements.

\begin{theorem} \label{thm:mainThm}
	Let $\g$ be a filtered $\L_\infty$-algebra. The maps $P_\bullet$ and $I_\bullet$ are inverse one to the other in homotopy, and thus provide a weak equivalence
	\[
	\MC_\bullet(\g)\simeq\MC(\g\otimes C_\bullet)
	\]
	of simplicial sets which is natural in $\g$.
\end{theorem}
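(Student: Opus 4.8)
The plan is to exhibit $P_\bullet$ and $I_\bullet$ as mutually inverse homotopy equivalences, from which the weak equivalence follows at once. The starting observation is that the Homotopy Transfer Theorem produces its two $\infty$-quasi-isomorphisms in a rigid way: because the Dupont contraction satisfies the side conditions $h_\bullet i_\bullet = 0$, $p_\bullet h_\bullet = 0$ and $h_\bullet^2 = 0$, the explicit formul{\ae} of \cite{vallette12} give the strict identity $(1\otimes p_\bullet)_\infty(1\otimes i_\bullet)_\infty = \id$ of $\infty$-morphisms on the small side, whereas $(1\otimes i_\bullet)_\infty(1\otimes p_\bullet)_\infty = (1\otimes r_\bullet)_\infty$ on the big side is only $\infty$-homotopic to the identity (the relation $1-i_\bullet p_\bullet = dh_\bullet + h_\bullet d$ living on $\g\otimes\Omega_\bullet$). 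Since the Maurer--Cartan functor sends $\infty$-morphisms of complete $\L_\infty$-algebras to maps of sets, preserving identities and composition, and since all our morphisms are simplicial, the first relation yields $P_\bullet I_\bullet = \id_{\MC(\g\otimes C_\bullet)}$ on the nose. It therefore remains only to prove that $\rect = I_\bullet P_\bullet$ is homotopic to $\id_{\MC_\bullet(\g)}$.

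For this I would produce an explicit simplicial homotopy. The homotopy part of the Homotopy Transfer Theorem provides an $\infty$-homotopy between $(1\otimes r_\bullet)_\infty$ and the identity of $\g\otimes\Omega_\bullet$, whose formul{\ae} are built out of the contracting homotopy $1\otimes h_\bullet$; as $h_\bullet$ is a simplicial map, this $\infty$-homotopy is itself simplicial. I would encode it as a simplicial $\infty$-morphism $\g\otimes\Omega_\bullet\rightsquigarrow(\g\otimes\Omega_1)\otimes\Omega_\bullet$ whose two endpoints, obtained by evaluating the $\Omega_1$-factor at $0$ and $1$, are $(1\otimes r_\bullet)_\infty$ and the identity. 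Applying $\MC$ level-wise and using the identification $\MC_\bullet(\g)^{\Delta^1}\cong\MC_\bullet(\g\otimes\Omega_1)$ — which expresses that $\MC_\bullet$ turns the polynomial forms on $\Delta^1$ into the simplicial path object — the induced map $\MC_\bullet(\g)\to\MC_\bullet(\g)^{\Delta^1}$ is precisely a simplicial homotopy from $\id$ to $\rect$. This gives $\rect\simeq\id_{\MC_\bullet(\g)}$ and hence the homotopy equivalence.

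The main obstacle is this middle step: making the $\infty$-homotopy rigorous in the complete setting and checking that its image under $\MC$ is a genuine simplicial homotopy. Concretely this requires (i) verifying that the transferred data and the connecting homotopy assemble into an honest $\infty$-morphism with values in $(\g\otimes\Omega_1)\otimes\Omega_\bullet$; (ii) controlling convergence, since in the complete case Maurer--Cartan elements are power series rather than polynomials, so one must check that the canonical filtration is respected at every stage and that the infinite sums defining the transferred morphisms and the homotopy are well defined; and (iii) establishing the path-object identification $\MC_\bullet(\g)^{\Delta^1}\cong\MC_\bullet(\g\otimes\Omega_1)$ and matching its endpoints with $\rect$ and $\id$. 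By contrast, naturality in $\g$ is routine: the Dupont contraction is independent of $\g$, the transferred structure and the morphisms $(1\otimes p_\bullet)_\infty$ and $(1\otimes i_\bullet)_\infty$ are given by formul{\ae} natural in the $\L_\infty$-structure, and $\MC$ is functorial, so $P_\bullet$, $I_\bullet$ and the homotopy above are all natural. Finally, a homotopy equivalence of simplicial sets is in particular a weak equivalence, which closes the argument.
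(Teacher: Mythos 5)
Your first step coincides with the paper's Lemma \ref{lemma:PIis1}: the side conditions force $(1\otimes p_\bullet)_\infty(1\otimes i_\bullet)_\infty=\id$, hence $P_\bullet I_\bullet=\id_{\MC(\g\otimes C_\bullet)}$ on the nose, and it remains to handle $\rect=I_\bullet P_\bullet$. From there, however, you and the paper part ways, and your route has a genuine gap. The paper never attempts to construct a simplicial homotopy from $\rect$ to the identity; it proves directly that $\rect$ is a weak equivalence by the Dolgushev--Rogers induction on the lower central series: the abelian case via Moore complexes (Lemma \ref{lemma:abelianDGL}), reduction of arbitrary basepoints to $0$ by twisting (Lemma \ref{lemma:shiftToZero}), an inductive step using the long exact sequence of the fibration $\MC_\bullet(\g^{(m+1)})\to\MC_\bullet(\g^{(m)})$ and the five lemma with a separate surjectivity/injectivity argument at $\pi_1$ (Lemma \ref{lemma:inductionStep}), and finally passage to the complete case because the limit functor on towers of fibrant objects is right Quillen. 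The statement that $P_\bullet$ and $I_\bullet$ are homotopy inverse then follows formally, since both simplicial sets are Kan.

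The gap in your plan is that its central object --- a simplicial, filtered $\infty$-homotopy between $(1\otimes r_\bullet)_\infty$ and the identity whose image under $\MC$ is an honest simplicial homotopy --- is exactly the hard content of the theorem, and you only list it as an ``obstacle'' rather than produce it. Two specific points would fail as written. First, $\MC_\bullet(\g)^{\Delta^1}$ is not isomorphic to $\MC_\bullet(\g\otimes\Omega_1)$: an $n$-simplex of the former is a compatible family of Maurer--Cartan elements of $\g\otimes\Omega_{n+1}$ indexed by the prisms of the triangulated $\Delta^n\times\Delta^1$, whereas an $n$-simplex of the latter lives in $\g\otimes\Omega_1\otimes\Omega_n$, and $\Omega_1\otimes\Omega_n$ is only quasi-isomorphic, not isomorphic, to the polynomial forms on the product. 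There is a natural comparison map $\MC_\bullet(\g\otimes\Omega_1)\to\MC_\bullet(\g)^{\Delta^1}$ given by restriction along the shuffle triangulation, and that map would suffice, but the claimed isomorphism is false and the endpoint identifications must be routed through it. Second, in the complete case $\MC_\bullet(\g)=\varprojlim_k\MC_\bullet(\g^{(k)})$, so the $\infty$-homotopy must be shown to respect the filtration $F^{\L_\infty}_\bullet$ and to converge levelwise; the usual assertion that $i_\infty p_\infty$ is $\infty$-homotopic to the identity is established in the (co)nilpotent setting, and transporting it to the filtered simplicial setting, with control of the infinite sums, is a theorem in its own right, not a remark. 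Until these are supplied, the second half of your argument is a plausible programme (consistent with Getzler's and Bandiera's results that $\gamma_\bullet(\g)$ is a deformation retract of $\MC_\bullet(\g)$) but not a proof.
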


\begin{remark}
	The simplicial $\L_\infty$-algebra $\g\otimes C_\bullet$ has the advantage of being quite smaller than $\g\otimes\Omega_\bullet$, since $C_n$ is finite dimensional for each $n$. The price to pay is that the algebraic structure is much more convoluted.
\end{remark}

\subsection{Proof of the main theorem}

The rest of this section is dedicated to the proof of this result. We begin with the following lemma.

\begin{lemma} \label{lemma:PIis1}
	We have
	\[
	P_\bullet I_\bullet = \id_{\MC(\g\otimes C_\bullet)}\ .
	\]
\end{lemma}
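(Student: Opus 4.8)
The plan is to deduce the statement from two facts: first, that the assignment sending an $\infty$-morphism to its induced map on Maurer--Cartan elements is functorial, so that $P_\bullet I_\bullet$ is the map induced by the composite $\infty$-morphism $(1\otimes p_\bullet)_\infty(1\otimes i_\bullet)_\infty$; and second, that this composite $\infty$-morphism is in fact the identity, so that the map it induces is $\id_{\MC(\g\otimes C_\bullet)}$.

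For the functoriality, I would recall the explicit formula for the map induced on Maurer--Cartan elements by an $\infty$-morphism $\Phi=(\phi_n)_{n\ge1}$, namely $\alpha\mapsto\sum_{n\ge1}\frac{1}{n!}\phi_n(\alpha,\ldots,\alpha)$, which converges because all the $\L_\infty$-algebras involved are complete: the canonical filtration on $\g$ induces compatible complete filtrations on $\g\otimes\Omega_\bullet$ and on $\g\otimes C_\bullet$, so the maps $P_\bullet$, $I_\bullet$ are well defined. The compatibility of this assignment with the composition of $\infty$-morphisms is standard, and it reduces $P_\bullet I_\bullet$ to the map induced by the single $\infty$-morphism $(1\otimes p_\bullet)_\infty(1\otimes i_\bullet)_\infty\colon\g\otimes C_\bullet\rightsquigarrow\g\otimes C_\bullet$.

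The core of the argument is then to show that this composite equals $\id$ as an $\infty$-morphism. I would use the explicit tree formulae for $(1\otimes i_\bullet)_\infty$ and $(1\otimes p_\bullet)_\infty$ from \cite[Sect. 10.3.5--6]{vallette12}: the higher components of $(1\otimes i_\bullet)_\infty$ are sums over trees whose leaves carry $1\otimes i_\bullet$ and whose internal edges and root carry $1\otimes h_\bullet$, while those of $(1\otimes p_\bullet)_\infty$ are sums over trees whose leaves carry $1\otimes h_\bullet$, whose internal edges carry $1\otimes h_\bullet$, and whose root carries $1\otimes p_\bullet$. When one expands the composition, grafting a nontrivial $(1\otimes i_\bullet)_\infty$-tree (root decorated by $1\otimes h_\bullet$) onto a leaf of a $(1\otimes p_\bullet)_\infty$-tree (decorated by $1\otimes h_\bullet$) produces a factor $(1\otimes h_\bullet)^2=1\otimes h_\bullet^2$; grafting a trivial branch $1\otimes i_\bullet$ onto such a leaf produces $(1\otimes h_\bullet)(1\otimes i_\bullet)=1\otimes h_\bullet i_\bullet$; and composing the root of a nontrivial $(1\otimes i_\bullet)_\infty$-tree with the single vertex $1\otimes p_\bullet$ produces $(1\otimes p_\bullet)(1\otimes h_\bullet)=1\otimes p_\bullet h_\bullet$. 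All three vanish by the side conditions $h_\bullet^2=0$, $h_\bullet i_\bullet=0$, and $p_\bullet h_\bullet=0$ of the Dupont contraction. Hence only the trivial tree survives, contributing $(1\otimes p_\bullet)(1\otimes i_\bullet)=1\otimes p_\bullet i_\bullet=\id$.

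I expect the main obstacle to be the bookkeeping in this last step: one must verify, in the combinatorial description of the composition of two $\infty$-morphisms, that \emph{every} tree with at least one internal vertex falls into one of the three vanishing configurations, and that the only surviving contribution is the one coming from the leading terms $1\otimes p_\bullet$ and $1\otimes i_\bullet$. Once this is established, combining it with the functoriality of the induced-map construction yields $P_\bullet I_\bullet=\id_{\MC(\g\otimes C_\bullet)}$ at once.
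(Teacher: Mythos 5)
Your proposal takes essentially the same route as the paper: functoriality of the $\MC$ functor reduces the claim to the fact that the composite $\infty$-morphism $(1\otimes p_\bullet)_\infty(1\otimes i_\bullet)_\infty$ is the identity, which the paper simply cites as \cite[Theorem 5]{dotsenko16} while you sketch the tree-combinatorial proof of that cited result. One minor inaccuracy in the sketch: the components of $(1\otimes p_\bullet)_\infty$ cannot have $1\otimes h_\bullet$ on \emph{every} leaf (a degree count shows each tree carries exactly one more $h_\bullet$ than the transferred operations, distributed among leaves otherwise decorated by identities and $i_\bullet p_\bullet$'s), but this does not affect the conclusion --- every composite tree with an internal vertex still vanishes by one of the side conditions $p_\bullet h_\bullet=0$, $h_\bullet i_\bullet=0$, $h_\bullet^2=0$, which is precisely the content of the theorem the paper invokes.
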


\begin{proof}
	This is because $(1\otimes p_\bullet)_\infty(1\otimes i_\bullet)_\infty$ is the identity, see e.g. \cite[Theorem 5]{dotsenko16}, and the functoriality of the Maurer--Cartan functor $\MC$.
\end{proof}

Therefore, it is enough to prove that the map
\[
\rect = I_\bullet P_\bullet:\MC_\bullet(\g)\longrightarrow\MC_\bullet(\g)
\]
is a weak equivalence. The idea is to use the same methods as in \cite{dolgushev15}. The situation is however slightly different, as the map $\rect$ is not of the form $\Phi\otimes1_{\Omega_\bullet}$, and thus Theorem 2.2 of loc. cit. cannot be directly applied. The first, easy step is to understand what happens at the level of the zeroth homotopy group.

\begin{lemma} \label{lemma:pi0}
	The map
	\[
	\pi_0(\rect):\pi_0\MC_\bullet(\g)\longrightarrow\pi_0\MC_\bullet(\g)
	\]
	is a bijection.
\end{lemma}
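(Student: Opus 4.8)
The plan is to reduce the statement to the observation that $\rect$ is already the identity at simplicial level $0$. Recall that for any simplicial set $X$ one has $\pi_0 X = X_0/{\sim}$, the quotient of the $0$-simplices by the equivalence relation generated by the two face maps out of $X_1$, and that a simplicial map $f$ induces $\pi_0(f)\colon[x]\mapsto[f_0 x]$. Hence everything reduces to computing $\rect_0\colon\MC_0(\g)\to\MC_0(\g)$, where $\MC_0(\g)=\MC(\g\otimes\Omega_0)=\MC(\g)$.

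First I would unwind the level-$0$ part of the Dupont contraction. The geometric $0$-simplex is a point, so $\Omega_0=\k$ and, the only basic form attached to a single vertex being the constant, $C_0=\k$ as well; both sit in degree $0$. For degree reasons the homotopy $h_0$ vanishes, and the contraction relation $1-i_0 p_0 = d h_0 + h_0 d$ then gives $i_0 p_0 = \id_{\Omega_0}$. After tensoring with $\g$, the induced contraction of $\g\otimes\Omega_0=\g$ onto $\g\otimes C_0=\g$ thus has vanishing homotopy $1\otimes h_0 = 0$.

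Next I would feed this into the explicit Homotopy Transfer Theorem formul{\ae} of \cite{vallette12}. Every higher component of the $\infty$-morphisms $(1\otimes p_0)_\infty$ and $(1\otimes i_0)_\infty$ is a composite involving the homotopy $1\otimes h_0$, which is zero; hence both $\infty$-morphisms collapse to their linear parts $1\otimes p_0$ and $1\otimes i_0$. Consequently the composite $(1\otimes r_0)_\infty = (1\otimes i_0)_\infty(1\otimes p_0)_\infty$ equals $1\otimes(i_0 p_0) = \id_{\g}$, and therefore $\rect_0 = \id_{\MC(\g)}$.

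Putting these together, $\pi_0(\rect)$ sends every class $[\alpha]$ to $[\rect_0\alpha]=[\alpha]$, so it is the identity of $\pi_0\MC_\bullet(\g)$ and in particular a bijection. I do not expect a genuine obstacle here — this is the ``easy step'' announced in the text — the only point needing a line of justification is that the transferred $\infty$-morphisms really degenerate to the identity at level $0$, which is immediate from the vanishing of $1\otimes h_0$ in the transfer formul{\ae}. The substantial work will come afterwards, in controlling $\rect$ at the positive simplicial levels.
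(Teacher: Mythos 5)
Your proposal is correct and follows essentially the same route as the paper: both arguments reduce to the observation that $\Omega_0=C_0=\k$ with $h_0=0$, so the transferred $\infty$-morphisms degenerate to their linear parts and $R_0$ is the identity on $\MC_0(\g)=\MC(\g)$. Your extra remark that the higher HTT components vanish because each involves $1\otimes h_0=0$ is a small but welcome justification that the paper leaves implicit.
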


\begin{proof}
	We have $\Omega_0=C_0=\k$, and the maps $i_0$ and $p_0$ both are the identity of $\k$. Therefore, the map $R_0$ is the identity of $\MC_0(\g)$, and thus obviously induces a bijection on $\pi_0$.
\end{proof}

For the higher homotopy groups, we start with a simplified version of \cite[Prop. 2.4]{dolgushev15}, which gives in some sense the base for an inductive argument. If the $\L_\infty$-algebra $\g$ is abelian, i.e. all of its brackets vanish, then so do the brackets at all levels of $\g\otimes\Omega_\bullet$. In this case, the Maurer--Cartan elements are exactly the cocycles of the underlying cochain complex, and therefore $\MC_\bullet(\g)$ is a simplicial vector space.

\begin{lemma} \label{lemma:abelianDGL}
	If the $\L_\infty$-algebra $\g$ is abelian, then $\rect$ is a weak equivalence of simplicial vector spaces.
\end{lemma}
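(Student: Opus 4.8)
The plan is to exploit the fact that when $\g$ is abelian, everything linearizes. Since all brackets vanish, the Maurer--Cartan equation $dx = 0$ cuts out the cocycles, so $\MC_\bullet(\g)$ is the simplicial vector space of degree-$1$ cocycles in $\g \otimes \Omega_\bullet$. More importantly, in the abelian case the Homotopy Transfer Theorem produces no higher brackets on $\g \otimes C_\bullet$ either, and the transferred $\infty$-morphisms $(1 \otimes p_\bullet)_\infty$ and $(1 \otimes i_\bullet)_\infty$ reduce to their linear leading terms $1 \otimes p_\bullet$ and $1 \otimes i_\bullet$. Consequently the map $\rect = I_\bullet P_\bullet$ on Maurer--Cartan elements is simply the restriction of the chain map $1 \otimes i_\bullet p_\bullet$ to degree-$1$ cocycles. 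So the whole statement collapses to a question about the linear contraction data.

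First I would make this reduction precise: verify that for abelian $\g$, all structure maps $\ell_n$ with $n \ge 2$ on $\g \otimes C_\bullet$ vanish (they are built from the transferred trees, every internal vertex of which carries a bracket of $\g \otimes \Omega_\bullet$, which is zero), and that the higher Taylor coefficients of $(1 \otimes p_\bullet)_\infty$ and $(1 \otimes i_\bullet)_\infty$ likewise vanish, again because those coefficients are sums over trees with at least one bracketing. This identifies $\rect$ with $(1 \otimes i_\bullet p_\bullet)|_{Z^1}$, where $Z^1$ denotes degree-$1$ cocycles.

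Next I would show this induced map on cocycles is a weak equivalence of simplicial vector spaces. Because everything is now linear, a map of simplicial vector spaces is a weak equivalence if and only if it is a quasi-isomorphism of the associated normalized chain complexes (via Dold--Kan), and in fact it suffices to show $1 \otimes i_\bullet p_\bullet$ is chain-homotopic to the identity on the relevant complex. But this is immediate from the contraction: since $1 - i_\bullet p_\bullet = d h_\bullet + h_\bullet d$, tensoring with $\id_\g$ gives $1 - (1 \otimes i_\bullet p_\bullet) = d(1 \otimes h_\bullet) + (1 \otimes h_\bullet)d$, so $1 \otimes i_\bullet p_\bullet$ is homotopic to the identity on all of $\g \otimes \Omega_\bullet$, hence induces the identity on cohomology and in particular on the homotopy groups of $\MC_\bullet(\g)$. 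The side conditions $h_\bullet i_\bullet = 0$ and $p_\bullet h_\bullet = 0$ ensure this homotopy descends compatibly to the image $\g \otimes C_\bullet$ as well.

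I expect the main subtlety to be bookkeeping rather than any deep obstacle: one must be careful that the simplicial degree $\bullet$ and the cohomological degree of $\g$ are kept straight, and that the passage from cocycles to homotopy groups via Dupont's \emph{simplicial} contraction is compatible level by level — that is, the homotopy $1 \otimes h_\bullet$ is genuinely simplicial so the weak equivalence statement (not merely a levelwise quasi-isomorphism) goes through. The cleanest way to handle this is to observe that $\MC_\bullet(\g) \cong Z^1(\g \otimes \Omega_\bullet)$ and $\MC(\g \otimes C_\bullet) \cong Z^1(\g \otimes C_\bullet)$ as simplicial vector spaces, that the Dupont contraction is a simplicial deformation retract, and that tensoring a simplicial deformation retract of cochain complexes with the fixed complex $\g$ and restricting to a fixed cohomological degree preserves the property of being a simplicial homotopy equivalence, which is in particular a weak equivalence.
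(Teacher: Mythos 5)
Your first step --- reducing $\rect$ to the linear map $1\otimes i_\bullet p_\bullet$ restricted to degree-one cocycles, by observing that all transferred brackets and all higher Taylor coefficients of $(1\otimes i_\bullet)_\infty$ and $(1\otimes p_\bullet)_\infty$ vanish when $\g$ is abelian --- is correct and is exactly the reduction the paper makes. The gap is in the second half. Weak equivalences of simplicial vector spaces are detected by the Moore complex, whose differential $\partial=\sum_i(-1)^id_i$ lives in the \emph{simplicial} direction $n$, whereas $1\otimes h_\bullet$ is a contraction in the \emph{internal} cohomological direction of each fixed $\g\otimes\Omega_n$. These are different gradings, and the inference ``$1\otimes i_\bullet p_\bullet$ is chain homotopic to the identity, hence induces the identity on the homotopy groups of $\MC_\bullet(\g)$'' does not follow: $\pi_n(\MC_\bullet(\g),0)$ is the $n$-th Moore homology of the simplicial vector space $\mathcal{Z}^1(\g\otimes\Omega_\bullet)$, not the internal cohomology of any $\g\otimes\Omega_n$. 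Your proposed repair --- that restricting a simplicial deformation retract ``to a fixed cohomological degree'' preserves the simplicial homotopy equivalence --- fails for the same reason: $1\otimes h_\bullet$ lowers the internal degree by one, so it does not restrict to an endomorphism of $(\g\otimes\Omega_\bullet)^1$, let alone of the cocycles, and therefore cannot serve as a simplicial homotopy between $\rect$ and the identity there. On cocycles one only gets $1-(1\otimes i_\bullet p_\bullet)=d(1\otimes h_\bullet)$, i.e.\ the difference is an internal coboundary, which is not a simplicial homotopy.

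The missing content is precisely what the paper imports from Dolgushev--Rogers: one must relate the Moore homology of $\mathcal{Z}^1(\g\otimes\Omega_\bullet)$ to the internal cohomology of $\g$ --- namely $\pi_n\bigl(\mathcal{Z}^1(\g\otimes\Omega_\bullet),0\bigr)\cong H^{1-n}(\g)$, which is \cite[Prop.~2.4]{dolgushev15} and is proved there by integration over simplices and an acyclicity argument --- in a way that is natural enough for the internal homotopy to be brought to bear. Equivalently, one shows directly that $\mathcal{M}(1\otimes p_\bullet)$ is a quasi-isomorphism of Moore complexes and then deduces the same for $\mathcal{M}(1\otimes i_\bullet)$ from $p_\bullet i_\bullet=1$, which is the route the paper takes. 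Your intuition that the contraction should do all the work points in the right direction, but as written the proof asserts rather than establishes the bridge between the internal and simplicial directions, and that bridge is where the entire content of the lemma sits.
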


\begin{proof}
	Recall that the Moore complex of a simplicial vector space $V_\bullet$ is defined by
	\[
	\mathcal{M}(V_\bullet)_n\coloneqq s^nV_n
	\]
	endowed with the differential
	\[
	\partial\coloneqq\sum_{i=0}^n(-1)^id_i\ ,
	\]
	where the maps $d_i$ are the face maps of the simplicial set $V_\bullet$. It is a standard result that
	\[
	\pi_0(V_\bullet) = H_0(\mathcal{M}(V_\bullet))\ ,\qquad \pi_i(V_\bullet,v)\cong\pi_i(V_\bullet,0) = H_i(\mathcal{M}(V_\bullet))
	\]
	for all $i\ge1$ and $v\in V_0$, and that a map of simplicial vector spaces is a weak equivalence if and only if it induces a quasi-isomorphism between the respective Moore complexes \cite[Cor. 2.5, Sect. III.2]{goerss09}.
	
	In our case,
	\[
	V_\bullet\coloneqq\MC_\bullet(\g) = \mathcal{Z}^1(\g\otimes\Omega_\bullet)
	\]
	is the simplicial vector space of $1$-cocycles of $\g\otimes\Omega_\bullet$. As in \cite{dolgushev15}, it can be proven that the map
	\[
	\mathcal{M}(1\otimes p_\bullet):\mathcal{M}(\mathcal{Z}^1(\g\otimes\Omega_\bullet))\longrightarrow\mathcal{M}(\mathcal{Z}^1(\g\otimes C_\bullet))
	\]
	is a quasi-isomorphism.  But as the bracket vanishes, this is exactly $P_\bullet$. Now
	\[
	\mathcal{M}(1\otimes p_\bullet)\mathcal{M}(1\otimes i_\bullet) = 1_{\mathcal{M}(\mathcal{Z}^1(\g\otimes\Omega_\bullet))}\ ,
	\]
	which implies that $\mathcal{M}(1\otimes i_\bullet)$ also is a quasi-isomorphism. It follows that $\rect$ is a weak equivalence, concluding the proof.
\end{proof}

Now we basically follow the structure of \cite[Sect. 4]{dolgushev15}. We define a filtration of $\g\otimes\Omega_\bullet$ by
\[
F_k(\g\otimes\Omega_\bullet)\coloneqq (F_k\g)\otimes\Omega_\bullet\ .
\]
We denote by
\[
(\g\otimes\Omega_\bullet)^{(k)}\coloneqq\g\otimes\Omega_\bullet/F_k(\g\otimes\Omega_\bullet) = \g^{(k)}\otimes\Omega_\bullet\ .
\]
The composite $(1\otimes i_\bullet)(1\otimes p_\bullet)$ induces an endomorphism $(1\otimes i_\bullet)^{(k)}(1\otimes p_\bullet)^{(k)}$ of $(\g\otimes\Omega_\bullet)^{(k)}$. All the $\infty$-morphisms coming into play obviously respect this filtration, and moreover $1\otimes h_\bullet$ passes to the quotients, so that we have
\[
1_{(\g\otimes\Omega_\bullet)^{(k)}} - (1\otimes i_\bullet)^{(k)}(1\otimes p_\bullet)^{(k)} = d(1\otimes h_\bullet)^{(k)} + (1\otimes h_\bullet)^{(k)}d
\]
for all $k$, which shows that $(1\otimes r_\bullet)_\infty$ is a filtered $\infty$-quasi isomorphism.

\medskip

The next step is to reduce the study of the homotopy groups with arbitrary basepoint to the study of the homotopy groups with basepoint $0\in\MC_0(\g)$.

\begin{lemma} \label{lemma:shiftToZero}
	Let $\alpha\in\MC(\g)$, and let $\g^\alpha$ be the $\L_\infty$-algebra obtained by twisting $\g$ by $\alpha$, that is the $\L_\infty$-algebra with the same underlying graded vector space, but with differential
	\[
	d^\alpha(x) \coloneqq dx + \sum_{n\ge2}\frac{1}{(n-1)!}\ell_n(\alpha,\ldots,\alpha,x)
	\]
	and brackets
	\[
	\ell^\alpha(x_1,\ldots,x_m) \coloneqq \sum_{n\ge m}\frac{1}{(n-m)!}\ell_n(\alpha,\ldots,\alpha,x_1,\ldots,x_m)\ .
	\]
	Let
	\[
	\mathrm{Shift}_\alpha:\MC_\bullet(\g^\alpha)\longrightarrow\MC_\bullet(\g)
	\]
	be the isomorphism of simplicial sets induced by the map given by
	\[
	\beta\in\g\longmapsto\alpha+\beta\in\g^\alpha\ .
	\]
	Then the following diagram commutes
	\[
	\begin{CD}
		\MC_\bullet(\g^\alpha) @>\mathrm{Shift}_\alpha>> \MC_\bullet(\g)\\
		@V{\rect^\alpha}VV @VV{\rect}V\\
		\MC_\bullet(\g^{\alpha}) @>\mathrm{Shift}_\alpha>> \MC_\bullet(\g)\\
	\end{CD}
	\]
	where
	\[
	\rect^\alpha(\beta)\coloneqq \sum_{k\ge1}(1\otimes r_\bullet)^\alpha_k(\beta^{\otimes k})
	\]
	and
	\[
	(1\otimes r_\bullet)^\alpha_k(\beta_1\otimes\cdots\otimes\beta_k)\coloneqq\sum_{j\ge0}\frac{1}{j!}(1\otimes r_\bullet)_{k+j}(\alpha^{\otimes j}\otimes\beta_1\otimes\ldots\otimes\beta_k)
	\]
	is the twist of $(1\otimes r_\bullet)_\infty$ by the Maurer--Cartan element $\alpha$. Here, we identified $\alpha\in\g$ with $\alpha\otimes1\in\g\otimes\Omega_\bullet$.
\end{lemma}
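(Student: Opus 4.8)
The plan is to read the square as an instance of the compatibility between the twisting of $\infty$-morphisms and the shift isomorphisms on Maurer--Cartan sets. By construction, $(1\otimes r_\bullet)^\alpha$ is the twist of the simplicial $\infty$-morphism $(1\otimes r_\bullet)_\infty$ by the constant Maurer--Cartan element $\alpha\otimes1\in\MC(\g\otimes\Omega_\bullet)$, in the sense of \cite{dolgushev15}. I would first record two structural inputs. First, twisting commutes with tensoring by $\Omega_\bullet$: since $1$ is a strict unit of $\Omega_\bullet$, the twisted brackets of $\g\otimes\Omega_\bullet$ by $\alpha\otimes1$ agree term by term with the brackets of $\g^\alpha\otimes\Omega_\bullet$, so that $(\g\otimes\Omega_\bullet)^{\alpha\otimes1}=\g^\alpha\otimes\Omega_\bullet$; this is precisely what makes $\rect^\alpha$ an endomorphism of $\MC_\bullet(\g^\alpha)$. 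Second, the morphism fixes the constant element, $\rect(\alpha\otimes1)=\alpha\otimes1$; indeed $\alpha\otimes1$ is a constant $0$-simplex, every map in sight is simplicial, and $R_0=\id$ by the triviality of the Dupont contraction in simplicial degree $0$ (Lemma \ref{lemma:pi0}), so the image of the constant simplex is again $\alpha\otimes1$.

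With these in place, I would verify commutativity by a direct expansion. The map $\rect$ sends a Maurer--Cartan element to $\sum_{k\ge1}\frac1{k!}(1\otimes r_\bullet)_k$ applied to its $k$-th symmetric power, so that
$$\rect\big(\mathrm{Shift}_\alpha(\beta)\big)=\rect(\alpha+\beta)=\sum_{k\ge1}\frac1{k!}(1\otimes r_\bullet)_k\big((\alpha+\beta)^{\otimes k}\big).$$
Expanding $(\alpha+\beta)^{\otimes k}$ by the binomial theorem and regrouping the terms according to the number $m$ of slots filled by $\beta$ and the number $j$ filled by $\alpha$, the factorials recombine through $\frac1{k!}\binom{k}{j}=\frac1{j!\,m!}$ with $k=j+m$. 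The contribution with $m=0$ collapses to $\sum_{j\ge1}\frac1{j!}(1\otimes r_\bullet)_j(\alpha^{\otimes j})=\rect(\alpha\otimes1)=\alpha$, while the contribution with $m\ge1$ is, slot by slot, the defining formula of the twisted coefficients:
$$\sum_{m\ge1}\frac1{m!}\sum_{j\ge0}\frac1{j!}(1\otimes r_\bullet)_{m+j}\big(\alpha^{\otimes j}\otimes\beta^{\otimes m}\big)=\sum_{m\ge1}\frac1{m!}(1\otimes r_\bullet)^\alpha_m\big(\beta^{\otimes m}\big)=\rect^\alpha(\beta).$$
Adding the two contributions yields $\rect(\alpha+\beta)=\alpha+\rect^\alpha(\beta)=\mathrm{Shift}_\alpha\big(\rect^\alpha(\beta)\big)$, which is exactly the commutativity of the square.

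The argument is essentially bookkeeping, so the real work lies in justifying that these formal manipulations are legitimate rather than in any conceptual difficulty. The two points that need care are the interchange and reindexing of the double summation and the Koszul signs carried by the symmetric tensors $\alpha^{\otimes j}\otimes\beta^{\otimes m}$ (recall that $\alpha$ and $\beta$ sit in odd degree after suspension); here completeness of $\g$ is what saves the day, since it makes every series converge in the filtration topology and renders the rearrangement absolutely convergent, so the Fubini-type interchange is valid. I expect the identification $(\g\otimes\Omega_\bullet)^{\alpha\otimes1}=\g^\alpha\otimes\Omega_\bullet$ together with the invariance $\rect(\alpha\otimes1)=\alpha\otimes1$ to be the only genuinely structural inputs; once they are in hand, the binomial reorganization above finishes the proof.
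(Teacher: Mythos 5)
Your argument is correct, but it takes a different route from the paper: the paper's entire proof is a one-line citation, ``the proof in \cite[Lemma 4.3]{dolgushev16} goes through \emph{mutatis mutandis}'', deferring the computation to Dolgushev--Rogers, whereas you actually carry out the computation. What you write is essentially the content of that cited lemma made explicit: the identification $(\g\otimes\Omega_\bullet)^{\alpha\otimes1}=\g^\alpha\otimes\Omega_\bullet$, the binomial regrouping $\tfrac1{k!}\binom{k}{j}=\tfrac1{j!\,m!}$ turning $\rect(\alpha+\beta)$ into $\rect(\alpha\otimes 1)+\sum_{m\ge1}\tfrac1{m!}(1\otimes r_\bullet)^\alpha_m(\beta^{\otimes m})$, and --- this is the one input genuinely specific to the present situation, which you correctly isolate --- the invariance $\rect(\alpha\otimes1)=\alpha\otimes1$, which follows from simpliciality of $\rect$ and $R_0=\id$ and is what makes the bottom horizontal map $\mathrm{Shift}_\alpha$ rather than $\mathrm{Shift}_{\rect(\alpha)}$. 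Your version has the advantage of being self-contained and of making visible exactly where each hypothesis is used; the paper's citation buys brevity at the cost of leaving this point implicit. Two cosmetic remarks. First, your displayed formula for $\rect^\alpha$ carries the prefactor $\tfrac1{m!}$, while the lemma statement writes $\rect^\alpha(\beta)=\sum_{k\ge1}(1\otimes r_\bullet)^\alpha_k(\beta^{\otimes k})$ without it; given that the paper's Maurer--Cartan equation uses the $\tfrac1{n!}$ convention, your version is the consistent one and the statement appears to drop the factorial, so it is worth flagging the convention you adopt. Second, your worry about Koszul signs is harmless but moot: all tensor factors are Maurer--Cartan elements of the same degree, so the symmetric regrouping introduces no signs.
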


\begin{proof}
	The proof in \cite[Lemma 4.3]{dolgushev16} goes through \emph{mutatis mutandis}.
\end{proof}

\begin{remark}
	The $\L_\infty$-algebra $\g^\alpha$ in Lemma \ref{lemma:shiftToZero} is endowed with the same filtration as $\g$.
\end{remark}

Now we proceed by induction to show that $R^{(k)}$ is a weak equivalence from $\MC_\bullet(\g^{(k)})$ to itself for all $k\ge2$. As the $\L_\infty$-algebra $(\g\otimes\Omega_\bullet)^{(2)}$ is abelian, the base step of the induction is given by Lemma \ref{lemma:abelianDGL}.

\begin{lemma} \label{lemma:inductionStep}
	Let $m\ge2$. Suppose that
	\[
	\rect^{(k)}:\MC(\g^{(k)})\longrightarrow\MC(\g^{(k)})
	\]
	is a weak equivalence for all $2\le k\le m$. Then $\rect^{(m+1)}$ is also a weak equivalence.
\end{lemma}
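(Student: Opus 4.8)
The plan is to realize $\rect^{(m+1)}$ as a map of Kan fibrations lying over $\rect^{(m)}$, and then to run a five-lemma argument on the associated long exact sequences in homotopy, using the abelian case (Lemma~\ref{lemma:abelianDGL}) to control the fibre and the inductive hypothesis to control the base. First I would record the short exact sequence of nilpotent $\L_\infty$-algebras
$$0 \longrightarrow K \longrightarrow \g^{(m+1)} \overset{\pi}{\longrightarrow} \g^{(m)} \longrightarrow 0,$$
where $K \coloneqq F_m^{\L_\infty}\g/F_{m+1}^{\L_\infty}\g$ is the kernel of the canonical projection. This $K$ is abelian: any bracket of two or more of its elements is a bracketing of at least $m+1$ elements of $\g$, hence lands in $F_{m+1}^{\L_\infty}\g$ and vanishes in the quotient. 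Since $\pi$ is a surjective strict morphism of nilpotent $\L_\infty$-algebras, Theorem~\ref{thm:Hinich} makes $\MC_\bullet(\pi)$ a Kan fibration. Because $(1\otimes r_\bullet)_\infty$ respects the filtration, $\rect^{(m+1)}$ covers $\rect^{(m)}$ along $\MC_\bullet(\pi)$, yielding a morphism of fibrations
$$
\begin{CD}
\MC_\bullet(\g^{(m+1)}) @>{\rect^{(m+1)}}>> \MC_\bullet(\g^{(m+1)}) \\
@V{\MC_\bullet(\pi)}VV @VV{\MC_\bullet(\pi)}V \\
\MC_\bullet(\g^{(m)}) @>{\rect^{(m)}}>> \MC_\bullet(\g^{(m)})
\end{CD}
$$
with vertical maps fibrations.

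Next I would reduce to the basepoint $0$. A map of Kan complexes is a weak equivalence precisely when it is a bijection on $\pi_0$ and an isomorphism on all $\pi_n$ ($n\ge1$) at every basepoint; the $\pi_0$ statement is Lemma~\ref{lemma:pi0}, which applies to $\g^{(m+1)}$. For a basepoint $\alpha\in\MC_0(\g^{(m+1)})$, Lemma~\ref{lemma:shiftToZero} identifies the homotopy groups of $\MC_\bullet(\g^{(m+1)})$ at $\alpha$ with those at $0$ for the twisted algebra, and under this identification $\rect^{(m+1)}$ becomes the corresponding transferred map $\rect^\alpha$ for the twist. As twisting preserves the standard filtration and its quotients, and leaves an abelian algebra abelian, the entire setup above carries over verbatim to the twisted algebra; it therefore suffices to treat the basepoint $0$.

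At the basepoint $0$ the fibre of $\MC_\bullet(\pi)$ is $\MC_\bullet(K)$, since $K$ is an abelian ideal and Maurer--Cartan elements of $\g^{(m+1)}\otimes\Omega_\bullet$ mapping to $0$ are exactly those in $K\otimes\Omega_\bullet$. On this fibre $\rect^{(m+1)}$ restricts to the transferred map of the abelian algebra $K$, which is a weak equivalence by Lemma~\ref{lemma:abelianDGL}, while the base map $\rect^{(m)}$ is a weak equivalence by the inductive hypothesis. The long exact sequences in homotopy of the two copies of the fibration (source and target of $\rect^{(m+1)}$) assemble into a commuting ladder whose fibre and base rungs are isomorphisms, so the five lemma forces $\rect^{(m+1)}$ to be an isomorphism on all $\pi_n(-,0)$, $n\ge1$. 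Combined with Lemma~\ref{lemma:pi0}, this shows $\rect^{(m+1)}$ is a weak equivalence.

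The hard part will be the bookkeeping in the reduction step: checking that twisting by $\alpha$ is compatible with the standard filtration, so that $(\g^{(m+1)})^\alpha$ again fits into a short exact sequence with abelian kernel to which both Lemma~\ref{lemma:abelianDGL} and the inductive hypothesis genuinely apply, and that the fibre identification with $\MC_\bullet(K)$ is natural in a manner compatible with $\rect$. Once these compatibilities are in place, the fibration property and the five-lemma argument are formal.
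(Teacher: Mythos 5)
Your strategy coincides with the paper's: the short exact sequence with abelian kernel $F_m^{\L_\infty}\g/F_{m+1}^{\L_\infty}\g$, Theorem \ref{thm:Hinich} to turn it into a morphism of Kan fibrations covering $\rect^{(m)}$, reduction to the basepoint $0$ via Lemma \ref{lemma:shiftToZero}, Lemma \ref{lemma:abelianDGL} on the fibre, the inductive hypothesis on the base, and a five-lemma argument on the ladder of long exact sequences. One point you use implicitly and should record: the restriction of $(1\otimes r_\bullet)_\infty^{(m+1)}$ to the kernel is honestly the linear map $(1\otimes i_\bullet)(1\otimes p_\bullet)$, because every higher Taylor coefficient involves a bracket of at least two elements of $F_m^{\L_\infty}\g$ and hence lands in $F_{2m}^{\L_\infty}\g\subseteq F_{m+1}^{\L_\infty}\g$; this is what lets Lemma \ref{lemma:abelianDGL} apply to the fibre map.

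The one genuine gap is the final claim that ``the five lemma forces $\rect^{(m+1)}$ to be an isomorphism on all $\pi_n(-,0)$, $n\ge1$.'' This is correct for $n\ge2$, where the relevant five-term window of the long exact sequence consists of groups and homomorphisms. For $n=1$ the window is
$$\pi_2\MC_\bullet(\g^{(m)})\longrightarrow\pi_1\MC_\bullet\big(F_m^{\L_\infty}\g/F_{m+1}^{\L_\infty}\g\big)\longrightarrow\pi_1\MC_\bullet(\g^{(m+1)})\longrightarrow\pi_1\MC_\bullet(\g^{(m)})\stackrel{\partial}{\longrightarrow}\pi_0\MC_\bullet\big(F_m^{\L_\infty}\g/F_{m+1}^{\L_\infty}\g\big)\ ,$$
whose last term is only a pointed set and whose last map $\partial$ is not a group homomorphism, so the classical five lemma does not apply as stated. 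The paper flags exactly this (``the long sequence is exact everywhere except on the level of $\pi_0$'') and replaces the citation by an explicit chase: for surjectivity, one uses that $\partial$ is the obstruction to lifting a loop from $\g^{(m)}$ to $\g^{(m+1)}$ and that $\rect$ acts bijectively on $\pi_0$ and $\pi_1$ of the abelian fibre to correct the lift; for injectivity, one traces an element of the kernel back through $\pi_1$ of the fibre and $\pi_2$ of the base. Your argument is completable with exactly these inputs, all of which you have set up, but as written the $\pi_1$ case is asserted rather than proved, and it is the only part of the lemma that requires more than a formal diagram chase.
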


\begin{proof}
	The zeroth homotopy set $\pi_0$ has already been taken care of in Lemma \ref{lemma:pi0}. Thanks to Lemma \ref{lemma:shiftToZero}, it is enough to prove that $\rect^{(m+1)}$ induces isomorphisms of homotopy groups $\pi_i$ based at $0$, for all $i\ge1$.
	
	Consider the following commutative diagram
	\[
	\begin{CD}
		0 @>>> \frac{F_m(\g\otimes\Omega_\bullet)}{F_{m+1}(\g\otimes\Omega_\bullet)} @>>> (\g\otimes\Omega_\bullet)^{(m+1)} @>>> (\g\otimes\Omega_\bullet)^{(m)} @>>> 0\\
		@. @VVV @V{(1\otimes r_\bullet)_\infty^{(m+1)}}VV @VV{(1\otimes r_\bullet)_\infty^{(m)}}V\\
		0 @>>> \frac{F_m(\g\otimes\Omega_\bullet)}{F_{m+1}(\g\otimes\Omega_\bullet)} @>>> (\g\otimes\Omega_\bullet)^{(m+1)} @>>> (\g\otimes\Omega_\bullet)^{(m)} @>>> 0
	\end{CD}
	\]
	where the leftmost vertical arrow is given by the linear term $(1\otimes i_\bullet)(1\otimes p_\bullet)$ of $(1\otimes r_\bullet)_\infty$ since all higher terms vanish, as can be seen by the explicit formul{\ae} for the $\infty$-quasi isomorphisms induced by the Homotopy Transfer Theorem given in \cite[Sect. 10.3.5--6]{vallette12}. Therefore, it is a weak equivalence as the $\L_\infty$-algebras in question are abelian. The first term in each row is the fiber of the next map, which is surjective. By Theorem \ref{thm:Hinich}, we know that applying the $\MC$ functor makes the horizontal maps on the right into fibrations of simplicial sets, while the objects we obtain on the left are easily seen to be the fibers Taking the long sequence in homotopy and using the five-lemma, we see that all we are left to do is to prove that $\rect^{(m+1)}$ induces an isomorphism on $\pi_1$. Notice that it is necessary to prove this, as the long sequence is exact everywhere except on the level of $\pi_0$.
	
	\medskip
	
	The long exact sequence of homotopy groups (truncated on both sides) reads
	\[
	\pi_2\MC_\bullet(\g^{(m)})\stackrel{\partial}{\longrightarrow}\pi_1\MC_\bullet\left(\frac{F_m\g}{F_{m+1}\g}\right)\longrightarrow\pi_1\MC_\bullet(\g^{(m+1)})\longrightarrow\pi_1\MC_\bullet(\g^{(m)})\stackrel{\partial}{\longrightarrow}\pi_0\MC_\bullet\left(\frac{F_m\g}{F_{m+1}\g}\right)\ ,
	\]
	where in the higher homotopy groups we left the basepoint implicit (as it is always $0$). The map
	\[
	\partial:\pi_1\MC_\bullet(\g^{(m)})\longrightarrow\pi_0\MC_\bullet\left(\frac{F_m\g}{F_{m+1}\g}\right) = H^1(F_{m+1}\g/F_m\g)
	\]
	is seen to be the obstruction to lifting an element of $\pi_1\MC_\bullet(\g^{(m)})$ to an element of $\pi_1\MC_\bullet(\g^{(m+1)})$ (e.g. \cite[Lemma 7.3]{goerss09}).
	
	\medskip
	
	\textit{The map $\pi_1(\rect^{(m+1)})$ is surjective:} Let $y\in\pi_1\MC_\bullet(\g^{(m+1)})$ and denote by $\overline{y}$ its image in $\pi_1\MC_\bullet(\g^{(m)})$. By the induction hypothesis, there exists a unique $\overline{x}\in\pi_1\MC_\bullet(\g^{(m)})$ which is mapped to $\overline{y}$ under $\rect^{(m)}$. As $\overline{y}$ is the image of $y$, we have $\partial(\overline{y}) = 0$, and this implies that $\partial(\overline{x})=0$, too. Therefore, there exists $x\in\pi_1\MC_\bullet(\g^{(m+1)})$ mapping to $\overline{x}$. Denote by $y'$ the image of $x$ under $\rect^{(m+1)}$. Then $y'y^{-1}$ is in the kernel of the map
	\[
	\pi_1\MC_\bullet(\g^{(m+1)})\longrightarrow\pi_1\MC_\bullet(\g^{(m)})\ .
	\]
	By exactness of the long sequence, and the fact that $\rect$ induces an automorphism of	$\pi_1\MC_\bullet\left(\frac{F_{m+1}\g}{F_m\g}\right)$, there exists an element $z\in\pi_1(\MC_\bullet(F_{m+1}\g/F_m\g))$ mapping to $y'y^{-1}$ under the composite
	\[
	\pi_1\MC_\bullet\left(\frac{F_{m+1}\g}{F_m\g}\right)\stackrel{\rect}{\longrightarrow}\pi_1\MC_\bullet\left(\frac{F_{m+1}\g}{F_m\g}\right)\longrightarrow\pi_1\MC_\bullet(\g^{(m+1)})\ .
	\]
	Let $x'$ be the image of $z$ in $\pi_1\MC_\bullet(\g^{(m+1)})$, then $(x')^{-1}x$ maps to $y$ under $\rect^{(m+1)}$. This proves the surjectivity of the map $\pi_1(\rect^{(m+1)})$.

	\medskip
	
	\textit{The map $\pi_1(\rect^{(m+1)})$ is injective:} Assume $x,x'\in\pi_1\MC_\bullet(\g^{(m+1)})$ map to the same element under $\rect^{(m+1)}$. Then $x(x')^{-1}$ maps to the neutral element $0$ under $\rect^{(m+1)}$. It follows that there is a $z\in\pi_1\MC_\bullet\left(\frac{F_{m+1}\g}{F_m\g}\right)$ mapping to $x(x')^{-1}$ which must be such that its image $w$ is itself the image of some $\tilde{w}\in\pi_2\MC_\bullet(\g^{(m)})$ under the map $\partial$. But by the induction hypothesis and the exactness of the long sequence, this implies that $z$ is in the kernel of the next map, and thus that $x(x')^{-1}$ is the identity element. Therefore, the map $\pi_1(\rect^{(m+1)})$ is injective.

	\medskip

	This ends the proof of the lemma.
\end{proof}

Finally, we can conclude the proof of Theorem \ref{thm:mainThm}.

\begin{proof}[Proof of Theorem \ref{thm:mainThm}]
	Lemma \ref{lemma:inductionStep}, together with all we have said before, shows that $\rect^{(m)}$ is a weak equivalence for all $m\ge2$. Therefore, we have the following commutative diagram:
	\[
	\begin{CD}
		\vdots @. \vdots\\
		@VVV @VVV\\
		\MC_\bullet(\g^{(4)}) @>{\sim}>> \MC_\bullet(\g^{(4)})\\
		@VVV @VVV\\
		\MC_\bullet(\g^{(3)}) @>{\sim}>> \MC_\bullet(\g^{(3)})\\
		@VVV @VVV\\
		\MC_\bullet(\g^{(2)}) @>{\sim}>> \MC_\bullet(\g^{(2)})\\
	\end{CD}
	\]
	where all objects are Kan complexes, all horizontal arrows are weak equivalences, and all vertical arrows are (Kan) fibrations by Theorem \ref{thm:Hinich}. It follows that the collection of horizontal arrows defines a weak equivalence between fibrant objects in the model category of towers of simplicial sets, see \cite[Sect. VI.1]{goerss09}. The functor from towers of simplicial sets to simplicial sets given by taking the limit is right adjoint to the constant tower functor, which trivially preserves cofibrations and weak equivalences. Thus, the constant tower functor is a left Quillen functor, and it follows that the limit functor is a right Quillen functor. In particular, it preserves weak equivalences between fibrant objects. Applying this to the diagram above proves that $\rect$ is a weak equivalence.
\end{proof}

\begin{remark}
	As an anonymous referee pointed out, there is an alternative, shorter proof of the fact that the map $\rect$ induces a bijection on all higher homotopy groups: A. Berglund \cite[Thm. 1.1]{ber15} gave an explicit group isomorphism
	\[
	\mathrm{B}:H_n(\g)\longrightarrow\pi_{n+1}\MC_\bullet(\g)\ ,\qquad n\ge0\ ,
	\]
	for any complete $\L_\infty$-algebra $\g$. One can use this map together with the explicit formula for the map $\rect$ derived from the Homotopy Transfer Theorem to immediately derive the result.
	
	\medskip
	
	In \cite{rnv18} an alternative proof of Berglund's theorem is given which relies on the results of the present article. It is therefore important to have a demonstration of Theorem \ref{thm:mainThm} which does not depend on it.
\end{remark}

\section{Properties and comparison} \label{sect:properties}

Theorem \ref{thm:mainThm} shows that the simplicial set $\MC(\g\otimes C_\bullet)$ is a new model for the Deligne--Hinich $\infty$-groupoid. This section is dedicated to the study of some properties of this object. We start by showing that it is a Kan complex, then we give some conditions on the differential forms representing its simplices. We show how we can use it to rectify cells of the Deligne--Hinich $\infty$-groupoid, which provides an alternative, simpler proof of \cite[Lemma B.2]{dolgushev15}. Finally we compare it with Getzler's functor $\gamma_\bullet$, proving that our model is contained in Getzler's. Independent results by Bandiera \cite{bandiera}, \cite{bandiera17} imply that the two models are actually isomorphic.

\subsection{Properties of $\MC_\bullet(\g\otimes C_\bullet)$}

The following proposition is the analogue to Theorem \ref{thm:Hinich} for our model.

\begin{proposition}
	Let $\g,\h$ be two filtered $\L_\infty$-algebras, and suppose that $\phi:\g\to\h$ is a morphism of $\L_\infty$-algebras inducing a fibration of simplicial sets under the functor $\MC_\bullet$, see for example Theorem \ref{thm:Hinich} for possible sufficient conditions. Then the induced morphism
	\[
	\MC(\phi\otimes\id_{C_\bullet}):\MC(\g\otimes C_\bullet)\longrightarrow\MC(\h\otimes C_\bullet)
	\]
	is also a fibration of simplicial sets. In particular, for any filtered $\L_\infty$-algebra $\g$, the simplicial set $\MC(\g\otimes C_\bullet)$ is a Kan complex.
\end{proposition}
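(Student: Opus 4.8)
The plan is to exhibit the map $\MC(\Phi\otimes\id_{C_\bullet})$ as a retract of $\MC_\bullet(\Phi)$ in the category of morphisms of simplicial sets, and then to invoke the fact that the class of fibrations of simplicial sets is closed under retracts. This reduces the statement to the main theorem and its accompanying lemmas, together with a purely formal argument.

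First I would recall the two maps produced by Theorem~\ref{thm:mainThm}. Naturally in the $\L_\infty$-algebra we have
$$I_\bullet\colon\MC(\g\otimes C_\bullet)\longrightarrow\MC_\bullet(\g),\qquad P_\bullet\colon\MC_\bullet(\g)\longrightarrow\MC(\g\otimes C_\bullet),$$
and by Lemma~\ref{lemma:PIis1} they satisfy $P_\bullet I_\bullet=\id$. Both are induced by the $\infty$-morphisms $(1\otimes i_\bullet)_\infty$ and $(1\otimes p_\bullet)_\infty$, obtained by applying the Homotopy Transfer Theorem to the contraction $(1\otimes p_\bullet,1\otimes i_\bullet,1\otimes h_\bullet)$; since this contraction is the fixed Dupont contraction tensored with $\g$, the construction is functorial in $\g$, so that $I_\bullet$ and $P_\bullet$ assemble into natural transformations. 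Applying this naturality to $\Phi$ produces the commuting diagram
$$
\begin{CD}
\MC(\g\otimes C_\bullet) @>I_\bullet>> \MC_\bullet(\g) @>P_\bullet>> \MC(\g\otimes C_\bullet)\\
@V{\MC(\Phi\otimes\id_{C_\bullet})}VV @V{\MC_\bullet(\Phi)}VV @VV{\MC(\Phi\otimes\id_{C_\bullet})}V\\
\MC(\h\otimes C_\bullet) @>I_\bullet>> \MC_\bullet(\h) @>P_\bullet>> \MC(\h\otimes C_\bullet)
\end{CD}
$$
in which both horizontal composites are the identity. This is precisely a retract diagram exhibiting $\MC(\Phi\otimes\id_{C_\bullet})$ as a retract of $\MC_\bullet(\Phi)$.

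Second, since by hypothesis $\MC_\bullet(\Phi)$ is a fibration, and the fibrations are exactly the maps with the right lifting property against the horn inclusions $\Lambda^n_k\hookrightarrow\Delta^n$ — a class that is automatically closed under retracts — it follows at once that $\MC(\Phi\otimes\id_{C_\bullet})$ is a fibration. For the final assertion I would specialise to the terminal map $\Phi\colon\g\to 0$ onto the trivial $\L_\infty$-algebra: by Theorem~\ref{thm:Hinich} the simplicial set $\MC_\bullet(\g)$ is a Kan complex, so $\MC_\bullet(\g)\to\MC_\bullet(0)=*$ is a fibration, and the proposition just proved gives that $\MC(\g\otimes C_\bullet)\to\MC(0\otimes C_\bullet)=*$ is a fibration, i.e.\ $\MC(\g\otimes C_\bullet)$ is a Kan complex.

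The step requiring the most care is the commutativity of the two squares, that is, the genuine naturality of $I_\bullet$ and $P_\bullet$ with respect to the $\infty$-morphism $\Phi$ and not merely for strict morphisms. Concretely, one must verify that the induced $\infty$-morphism $\Phi\otimes\id_{C_\bullet}$ on the transferred structures is intertwined by $(1\otimes i_\bullet)_\infty$ and $(1\otimes p_\bullet)_\infty$ with $\Phi\otimes 1_{\Omega_\bullet}$. This follows from the functoriality of the explicit Homotopy Transfer Theorem formul{\ae} of \cite{vallette12} in the coefficient algebra, together with the fact that the Dupont contraction data act only on the form factor and therefore commute with $\Phi\otimes 1$. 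Once this naturality is established, the retract argument above is entirely formal.
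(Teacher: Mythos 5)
Your proof is correct and follows essentially the same route as the paper: exhibit $\MC(\Phi\otimes\id_{C_\bullet})$ as a retract of $\MC_\bullet(\Phi)$ via $I_\bullet$ and $P_\bullet$ using Lemma \ref{lemma:PIis1}, then use closure of fibrations under retracts. Your explicit attention to the naturality of $I_\bullet$ and $P_\bullet$ with respect to the $\infty$-morphism $\Phi$ is a point the paper leaves implicit, but it does not change the argument.
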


\begin{proof}
	By assumption, the morphism
	\[
	\MC_\bullet(\phi):\MC_\bullet(\g)\longrightarrow\MC_\bullet(\h)
	\]
	is a fibration of simplicial set, and by Lemma \ref{lemma:PIis1} the following diagram exhibits $\MC(\phi\otimes\id_{C_\bullet})$ as a retract of $\MC_\bullet(\phi)$.
	\begin{center}
		\begin{tikzpicture}
			\node (a) at (0,0) {$\MC(\g\otimes C_\bullet)$};
			\node (b) at (3,0) {$\MC_\bullet(\g)$};
			\node (c) at (6,0) {$\MC(\g\otimes C_\bullet)$};
			\node (d) at (0,-2) {$\MC(\g\otimes C_\bullet)$};
			\node (e) at (3,-2) {$\MC_\bullet(\g)$};
			\node (f) at (6,-2) {$\MC(\g\otimes C_\bullet)$};
			
			\draw[->] (a) -- node[above]{$I_\bullet$} (b);
			\draw[->] (b) -- node[above]{$P_\bullet$} (c);
			\draw[->] (d) -- node[below]{$I_\bullet$} (e);
			\draw[->] (e) -- node[below]{$P_\bullet$} (f);
			\draw[->] (a) -- node[left]{$\MC(\phi\otimes\id_{C_\bullet})$} (d);
			\draw[->] (b) -- node[left]{$\MC_\bullet(\phi)$} (e);
			\draw[->] (c) -- node[right]{$\MC(\phi\otimes\id_{C_\bullet})$} (f);
		\end{tikzpicture}
	\end{center}
	As the class of fibrations is closed under retracts, this concludes the proof.
\end{proof}

We consider the composite $\rect=I_\bullet P_\bullet$, which is not the identity.

\begin{definition}
	We call the morphism
	\[
	\rect:\MC_\bullet(\g)\longrightarrow\MC_\bullet(\g)
	\]
	the \emph{rectification map}.
\end{definition}

The following result is a wide generalization of \cite[Lemma B.2]{dolgushev15}, as well as a motivation for the name ``rectification map'' for $\rect$.

\begin{proposition} \label{prop:forElModel}
	We consider an element
	\[
	\alpha \coloneqq \alpha_1(t_0,\ldots,t_n) + \cdots\in\MC_n(\g)\ ,
	\]
	where the dots indicate terms in $\g^{1-k}\otimes\Omega_n^k$ with $1\le k\le n$. Then $\beta\coloneqq \rect(\alpha)\in\MC_n(\g)$ is of the form
	\[
	\beta = \beta_1(t_0,\ldots,t_n) + \cdots + \xi \otimes\omega_{0\ldots n}\ ,
	\]
	where the dots indicate terms in $\g^{1-k}\otimes\Omega_n^k$ with $1\le k\le n-1$, where $\xi$ is an element of $\g^{1-n}$, and where $\alpha_1$ and $\beta_1$ agree on the vertices of $\Delta^n$. In particular, if $\alpha\in\MC_1(\g)$, then $\beta = F(\alpha)\in\MC_1(\g)$ is of the form
	\[
	\beta = \beta_1(t) +\lambda dt
	\]
	for some $\lambda\in\g^0$, and satisfies
	\[
	\beta_1(0) = \alpha_1(0)\qquad\text{and}\qquad\beta_1(1) = \alpha_1(1)\ ,
	\]
	so that $\lambda$ gives a gauge equivalence between $\alpha_1(0)$ and $\alpha_1(1)$.
\end{proposition}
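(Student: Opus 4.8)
The plan is to understand the structure of $\rect = (1\otimes i_\bullet)_\infty(1\otimes p_\bullet)_\infty$ at the level of Maurer--Cartan elements by tracking the degrees of the differential forms involved, using the explicit formul{\ae} of the Homotopy Transfer Theorem from \cite[Sect. 10.3.5--6]{vallette12}. The key observation is that $\beta = \rect(\alpha)$ lands in $\MC(\g\otimes C_\bullet)$ after applying $(1\otimes p_\bullet)_\infty$ and then gets pushed back to $\g\otimes\Omega_n$ via $(1\otimes i_\bullet)_\infty$. Since $C_n$ is spanned by the basic forms $\omega_I$ with $|I|=k+1$ living in form-degree $k$, the image of $(1\otimes i_\bullet)$ on a form-degree $k$ piece is a combination of $\omega_I$ with $|I| = k+1$; the top form-degree contribution lives in $\g^{1-n}\otimes\omega_{0\ldots n}$, which produces exactly the claimed term $\xi\otimes\omega_{0\ldots n}$ with $\xi\in\g^{1-n}$.

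First I would spell out the explicit structure of the $\infty$-morphisms $(1\otimes p_\bullet)_\infty$ and $(1\otimes i_\bullet)_\infty$ via the HTT tree formul{\ae}, noting that the linear term of $(1\otimes i_\bullet)_\infty$ is $1\otimes i_\bullet$ and all higher terms involve the homotopy $1\otimes h_\bullet$ composed with brackets. Next I would decompose $\beta$ by form-degree: writing $\beta = \sum_{k=0}^n \beta^{(k)}$ with $\beta^{(k)}\in\g^{1-k}\otimes\Omega_n^k$, I would isolate the top piece $\beta^{(n)}$, which must be proportional to the unique top-degree basic form $\omega_{0\ldots n}$ in the image of $i_\bullet$, giving $\beta^{(n)} = \xi\otimes\omega_{0\ldots n}$. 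The degree count forces $\xi\in\g^{1-n}$ since $\omega_{0\ldots n}$ has form-degree $n$ and $\beta$ is homogeneous of total degree $1$. For the agreement of $\alpha_1$ and $\beta_1$ on vertices, I would restrict to a vertex (setting the appropriate $t_i=1$ and the rest to $0$, and $dt_i=0$): at a vertex all the basic forms $\omega_I$ with $|I|\ge 2$ vanish, and the contraction data $(i_\bullet, p_\bullet, h_\bullet)$ restricts to the identity on the form-degree-$0$ part, so the higher HTT terms (which all carry at least one $h_\bullet$, hence at least one positive form-degree factor) contribute nothing at a vertex; thus the form-degree-$0$ component $\beta_1$ equals $\alpha_1$ on vertices.

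The $n=1$ specialization is then immediate: on $\Delta^1$ the only basic forms are $\omega_0, \omega_1$ (form-degree $0$) and $\omega_{01}$ (form-degree $1$, equal to $dt$ up to the normalization in the definition of $\omega_I$), so $\beta = \beta_1(t) + \lambda\,dt$ with $\lambda\in\g^0$, and the vertex-agreement statement gives $\beta_1(0)=\alpha_1(0)$ and $\beta_1(1)=\alpha_1(1)$.

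The main obstacle I anticipate is bookkeeping the higher HTT terms carefully enough to justify the two structural claims simultaneously, namely that the top form-degree term is \emph{exactly} $\xi\otimes\omega_{0\ldots n}$ (and not a more general form) and that \emph{no} higher correction survives at the vertices. Both reduce to the single fact that every nonlinear term of $(1\otimes i_\bullet)_\infty$ factors through $1\otimes h_\bullet$, together with the two defining properties of the Dupont contraction: $h_\bullet$ raises form-degree and $p_\bullet h_\bullet = 0$ with $h_\bullet i_\bullet = 0$. Making the vertex-restriction argument rigorous requires noting that evaluation at a vertex is an algebra map compatible with the contraction, so that it commutes with the transferred structure in the relevant way; verifying this compatibility is the one genuinely technical point, and I would handle it by observing that the restriction of the Dupont contraction to a vertex recovers the trivial contraction of $\k$ onto itself, under which every nonlinear HTT term vanishes.
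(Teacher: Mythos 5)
Your overall route is the same as the paper's: the vertex statement follows from the fact that the contraction, hence $\rect$, is simplicial and reduces to the identity (with zero homotopy) on $0$-simplices, so evaluation at a vertex commutes with $\rect$ and $R_0=\id$; and the top form-degree statement follows from the explicit tree formula for $(1\otimes i_n)_\infty$, in which every term of arity at least $2$ has $1\otimes h_n$ at the root, leaving only the linear term $1\otimes i_n$ to contribute to $\g^{1-n}\otimes\Omega_n^n$, where the sole top-degree basic form is $\omega_{0\ldots n}$.

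There is, however, one assertion you must fix, because as written it undercuts the key step rather than proving it: you claim that $h_\bullet$ \emph{raises} form-degree. It does the opposite. The Dupont homotopy is built from integration operators and has cohomological degree $-1$ (as it must, for $1-i_\bullet p_\bullet = dh_\bullet + h_\bullet d$ to be homogeneous), so $1\otimes h_n$ maps $\g\otimes\Omega_n^k$ into $\g\otimes\Omega_n^{k-1}$, and its image therefore sits in form-degree at most $n-1$. This is precisely why the nonlinear HTT terms, all of which end with $1\otimes h_n$ at the root, cannot produce anything in $\g^{1-n}\otimes\Omega_n^n$; if $h_\bullet$ raised form-degree, a tree term ending in $h_n$ applied to something of form-degree $n-1$ could land in the top degree and the argument would collapse. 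With this correction your proof is complete and matches the paper's. Your further observation that the vertex claim reduces to the contraction restricting to the trivial one on $\k$ at level $0$ (so that every term involving $h$ dies at a vertex) is a valid, slightly more hands-on version of the paper's remark that $\rect$ commutes with face maps and is the identity on $0$-simplices.
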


\begin{remark}
	As $\rect$ is a projector, this proposition in fact gives information on the form of all the elements of $\MC(\g\otimes C_\bullet)$.
\end{remark}

\begin{proof}
	First notice that the map $\rect$ commutes with the face maps and is the identity on $0$-simplices, thus evaluation of the part of $\beta$ in $\g^1\otimes\Omega_n^0$ at the vertices gives the same result as evaluation at the vertices of $\alpha_1$.	Next, we notice that $\beta$ is in the image of $I_\bullet$. We use the explicit formula for $(1\otimes i_n)_\infty$ of \cite[Sect. 10.3.5]{vallette12}: the operator acting on arity $k\ge2$ is given, up to signs, by the sum over all rooted trees with $1\otimes i_n$ put at the leaves, the brackets $\ell_n$ of the corresponding arity at all vertices, and $1\otimes h$ at the inner edges and at the root. But the $1\otimes h$ at the root lowers the degree of the part of the form in $\Omega_n$ by $1$, and thus we cannot get something in $\g^{1-n}\otimes\Omega_n^n$ from these terms. The only surviving term is therefore the one coming from $(1\otimes i_n)(P_\bullet(\alpha))$, given by $\xi\otimes\omega_{0\ldots n}$ for some $\xi\in\g^{1-n}$.
\end{proof}

\subsection{Comparison with Getzler's $\infty$-groupoid $\gamma_\bullet$}

Finally, we compare the simplicial set $\MC(\g\otimes C_\bullet)$ with Getzler's Kan complex $\gamma_\bullet(\g)$. We start with an easy result that follows directly from our approach, before exposing Bandiera's result that these two simplicial sets are actually isomorphic.

\begin{lemma} \label{lemma:comparisonGamma}
	We have
	\[
	I_\bullet\MC(\g\otimes C_\bullet)\subseteq\gamma_\bullet(\g)\ .
	\]
\end{lemma}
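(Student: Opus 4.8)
The plan is to verify directly that every element in the image of $I_\bullet$ lies in the kernel of $1\otimes h_\bullet$, which is precisely the defining condition for membership in $\gamma_\bullet(\g)$. Fix $\beta\in\MC(\g\otimes C_\bullet)$. Since $I_\bullet$ is the map induced on Maurer--Cartan elements by the $\infty$-morphism $(1\otimes i_\bullet)_\infty$, we may write
$$I_\bullet\beta=\sum_{k\ge1}(1\otimes i_\bullet)_k\bigl(\beta^{\otimes k}\bigr),$$
where $(1\otimes i_\bullet)_k$ denotes the arity-$k$ component of $(1\otimes i_\bullet)_\infty$. It then suffices to show that $(1\otimes h_\bullet)\bigl(I_\bullet\beta\bigr)=0$.

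Next I would read off the shape of the components $(1\otimes i_\bullet)_k$ from the explicit Homotopy Transfer Theorem formul{\ae} in \cite[Sect. 10.3.5]{vallette12}, exactly as in the proof of Proposition \ref{prop:forElModel}. The arity-one term is simply $1\otimes i_\bullet$, while each summand of arity $k\ge2$ is, up to sign, indexed by a rooted tree carrying $1\otimes i_\bullet$ at the leaves, the transferred Lie bracket at every internal vertex, a copy of $1\otimes h_\bullet$ at every internal edge, and---crucially for us---a copy of $1\otimes h_\bullet$ applied at the root.

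The conclusion is then immediate from the side conditions of the Dupont contraction. Applying $1\otimes h_\bullet$ to the arity-one term gives $(1\otimes h_\bullet i_\bullet)(\beta)=0$ since $h_\bullet i_\bullet=0$; applying it to any summand of arity $k\ge2$ composes $1\otimes h_\bullet$ with the root copy of $1\otimes h_\bullet$, which vanishes because $h_\bullet^2=0$. Hence $(1\otimes h_\bullet)(I_\bullet\beta)=0$, that is $I_\bullet\beta\in\gamma_n(\g)$ for every $n$, which establishes the inclusion $I_\bullet\MC(\g\otimes C_\bullet)\subseteq\gamma_\bullet(\g)$.

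I do not anticipate a genuine obstacle here: the only point requiring care is the correct identification of the root structure in the tree formula for $(1\otimes i_\bullet)_\infty$, and this was already established in the proof of Proposition \ref{prop:forElModel}. Everything else reduces to the two side conditions $h_\bullet i_\bullet=0$ and $h_\bullet^2=0$, so the argument is essentially a one-line computation once the structure of $i_\infty$ is in hand.
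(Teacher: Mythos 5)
Your proof is correct and follows essentially the same route as the paper: both arguments invoke the explicit tree formul{\ae} for $(1\otimes i_\bullet)_\infty$ from \cite[Sect.~10.3.5]{vallette12} together with the side conditions of the Dupont contraction to conclude that the image of $I_\bullet$ lies in $\ker(1\otimes h_\bullet)$. Your version is in fact slightly more explicit than the paper's, which only cites $h_\bullet i_\bullet=0$, whereas you correctly point out that the arity $k\ge2$ terms additionally require $h_\bullet^2=0$ applied to the root copy of $1\otimes h_\bullet$.
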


\begin{proof}
	We have $h_\bullet i_\bullet = 0$. Therefore, by the explicit formula for $(i_\bullet)_\infty$ given in \cite[Sect. 10.3.5]{vallette12}, we have $h_\bullet(\beta) = 0$ for any $\beta\in\g\otimes\Omega_\bullet$ in the image of $I_\bullet$. Thus
	\[
	h_\bullet(\MC(\g\otimes C_\bullet)) = h_\bullet I_\bullet P_\bullet(\MC_\bullet(\g)) = 0\ ,
	\]
	which proves the claim.
\end{proof}

In his thesis \cite{bandiera}, Bandiera proves the following.

\begin{theorem}[{\cite[Thm. 2.3.3 and Prop 5.2.7]{bandiera}}] \label{thm:bandiera}
	The map
	\[
	(P_\bullet,1\otimes h_\bullet):\MC_\bullet(\g)\longrightarrow\MC(\g\otimes C_\bullet)\times\big(\mathrm{Im}(1\otimes h_\bullet)\cap (\g\otimes\Omega_\bullet)^1\big)
	\]
	is bijective. In particular, its restriction to $\gamma_\bullet(\g) = \ker(1\otimes h_\bullet)\cap\MC_\bullet(\g)$ gives an isomorphism of simplicial sets
	\[
	P_\bullet:\gamma_\bullet(\g)\longrightarrow\MC(\g\otimes C_\bullet)\ .
	\]
\end{theorem}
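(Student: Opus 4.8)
The plan is to exhibit an explicit two-sided inverse by a recursion along the canonical filtration, reducing all convergence questions to completeness. Since $\MC_\bullet(\g)=\varprojlim_k\MC_\bullet(\g^{(k)})$ and the target likewise splits as an inverse limit of its truncations, and since $P_\bullet$ and $1\otimes h_\bullet$ are compatible with these towers, it suffices to prove the statement at each truncation $\g^{(k)}$ and to induct on $k$, passing to the limit at the end. Throughout, the only inputs I would use are the side conditions $h_\bullet i_\bullet=0$, $p_\bullet h_\bullet=0$, $h_\bullet^2=0$, the homotopy identity $1-i_\bullet p_\bullet=dh_\bullet+h_\bullet d$ (whence $h_\bullet dh_\bullet=h_\bullet$ and $p_\bullet dh_\bullet=0$), the tree formul\ae\ for $p_\infty$ and $i_\infty$ of \cite[Sect. 10.3.5--6]{vallette12}, and $P_\bullet I_\bullet=\id$ (Lemma \ref{lemma:PIis1}). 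The inductive steps run through the central extensions
$$0\longrightarrow\tfrac{F_m\g}{F_{m+1}\g}\otimes\Omega_\bullet\longrightarrow\g^{(m+1)}\otimes\Omega_\bullet\longrightarrow\g^{(m)}\otimes\Omega_\bullet\longrightarrow 0,$$
whose kernel $A\otimes\Omega_\bullet$ with $A:=F_m\g/F_{m+1}\g$ is abelian.

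For injectivity, suppose $\alpha,\alpha'\in\MC_\bullet(\g)$ satisfy $P_\bullet\alpha=P_\bullet\alpha'$ and $(1\otimes h_\bullet)\alpha=(1\otimes h_\bullet)\alpha'$. I would show $\alpha\equiv\alpha'$ modulo $F_k$ for all $k$, hence $\alpha=\alpha'$ by completeness. Writing $v$ for the leading component of $\alpha'-\alpha$ in $A\otimes\Omega_\bullet$, the hypotheses give $(1\otimes p_\bullet)v=0$ and $(1\otimes h_\bullet)v=0$, since the higher tree terms of $P_\bullet$ and all bracket terms raise filtration and only the linear parts survive; subtracting the two Maurer--Cartan equations and reducing modulo the next filtration step gives $dv=0$. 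The Dupont decomposition $v=(1\otimes i_\bullet p_\bullet)v+d(1\otimes h_\bullet)v+(1\otimes h_\bullet)dv$ then forces $v=0$, completing the induction.

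For surjectivity, given $(\beta,\eta)$ in the target I would build $\alpha$ by the same induction. Having produced a lift $\widetilde\alpha$ of the level-$m$ solution with Maurer--Cartan defect $W\in A\otimes\Omega_\bullet$, which is a cocycle ($dW=0$), the corrections needed to meet the three constraints are $(1\otimes p_\bullet)v=b$, $(1\otimes h_\bullet)v=e$ and $dv=-W$, where $b$ and $e$ are read off from $\beta$ and $\eta$. The unique candidate is $v=(1\otimes i_\bullet)b+de-(1\otimes h_\bullet)W$. Using $\eta\in\mathrm{Im}(1\otimes h_\bullet)$ one checks $(1\otimes p_\bullet)v=b$ via $p_\bullet dh_\bullet=0$ and $(1\otimes h_\bullet)v=e$ via $h_\bullet dh_\bullet=h_\bullet$; since $dW=0$, the remaining Maurer--Cartan constraint $dv=-W$ reduces to the single consistency relation $db=-(1\otimes p_\bullet)W$.

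The main obstacle is precisely this relation, which must follow from $\beta$ being a genuine Maurer--Cartan element of the transferred structure on $\g\otimes C_\bullet$. I would prove it by comparing the Maurer--Cartan defect of $\widetilde\alpha$ with that of $P_\bullet\widetilde\alpha$: the $\infty$-morphism $(1\otimes p_\bullet)_\infty$ intertwines the two curvatures, so to leading filtration order the $A\otimes C_\bullet$-component of the defect of $P_\bullet\widetilde\alpha$ equals $(1\otimes p_\bullet)W$, and since $\beta=P_\bullet\alpha$ is Maurer--Cartan this component must equal $-db$. Granting this, the recursion is consistent at every level, defines a well-defined element of the inverse limit by completeness, and is inverse to $(P_\bullet,1\otimes h_\bullet)$ by the injectivity already established. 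Finally, all the maps involved are simplicial, so the bijection is an isomorphism of simplicial sets; restricting the source to $\ker(1\otimes h_\bullet)\cap\MC_\bullet(\g)=\gamma_\bullet(\g)$, which maps onto the first factor, yields the isomorphism $P_\bullet\colon\gamma_\bullet(\g)\xrightarrow{\ \sim\ }\MC(\g\otimes C_\bullet)$, refining Lemma \ref{lemma:comparisonGamma}.
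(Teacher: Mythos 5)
The paper offers no proof of this statement --- it is imported verbatim from Bandiera's thesis as a black box --- so there is nothing internal to compare against; your argument is a genuine, self-contained proof, and as far as I can check it is correct. The skeleton is sound: reduce to the tower $\g^{(k)}$ by completeness, run the recursion through the abelian extensions $0\to (F_m\g/F_{m+1}\g)\otimes\Omega_\bullet\to\g^{(m+1)}\otimes\Omega_\bullet\to\g^{(m)}\otimes\Omega_\bullet\to 0$, and on the abelian kernel solve the three leading-order constraints $(1\otimes p_\bullet)v=b$, $(1\otimes h_\bullet)v=e$, $dv=-W$ via the decomposition $v=(1\otimes i_\bullet p_\bullet)v+d(1\otimes h_\bullet)v+(1\otimes h_\bullet)dv$. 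I verified the identities you rely on ($h_\bullet dh_\bullet=h_\bullet$, $p_\bullet dh_\bullet=0$, hence $p_\bullet v=b$ and $h_\bullet v=e$ for the candidate $v=(1\otimes i_\bullet)b+de-(1\otimes h_\bullet)W$), the reduction of $dv=-W$ to the single consistency relation $db=-(1\otimes p_\bullet)W$ using $dW=0$, and the fact that this relation follows because every nonlinear term of $(1\otimes p_\bullet)_\infty$ applied to the curvature raises filtration, so the leading component of the curvature of $P_\bullet\widetilde\alpha$ is $(1\otimes p_\bullet)W$ while $\beta$ being Maurer--Cartan forces it to be $-db$. Two small points you should make explicit: (i) the computation $h_\bullet de=e$ needs $e\in\mathrm{Im}(1\otimes h_\bullet)$ on the kernel, which holds because $\mathrm{Im}(1\otimes h_\bullet)=\g\otimes\mathrm{Im}(h_\bullet)$ is compatible with the filtration in the $\g$-factor; (ii) the second factor of the target as printed has a degree mismatch, since $(1\otimes h_\bullet)\alpha$ lives in $(\g\otimes\Omega_\bullet)^0$ rather than $(\g\otimes\Omega_\bullet)^1$ --- this is an issue with the statement, not with your argument. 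Your route is in the same spirit as Bandiera's original (an abstract recursion for Maurer--Cartan sets under a contraction), and it has the merit of using only ingredients already present in this paper: the side conditions, the tree formul\ae{} of the Homotopy Transfer Theorem, and Lemma \ref{lemma:PIis1}.
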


\begin{remark}
	Thanks to our approach, we immediately have an inverse for the map $P_\bullet$: it is of course the map $I_\bullet$.
\end{remark}

As a consequence of Bandiera's result and of Proposition \ref{prop:forElModel}, we can partially characterize the thin elements of $\gamma_\bullet(\g)$.

\begin{lemma}
	For each $n\ge1$, the thin elements contained in $\gamma_n(\g)$ are those with no term in $\g^{1-n}\otimes\Omega_n^n$.
\end{lemma}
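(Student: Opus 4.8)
The plan is to deduce the statement from Bandiera's isomorphism (Theorem~\ref{thm:bandiera}) together with Proposition~\ref{prop:forElModel}. The crucial observation is that, since $P_\bullet\colon\gamma_\bullet(\g)\to\MC(\g\otimes C_\bullet)$ is an isomorphism whose inverse is $I_\bullet$, the rectification map $\rect=I_\bullet P_\bullet$ restricts to the identity on $\gamma_\bullet(\g)$. Hence every $\alpha\in\gamma_n(\g)$ satisfies $\alpha=\rect(\alpha)$, and so its shape is completely dictated by Proposition~\ref{prop:forElModel}; the thinness condition will then single out the top-degree component.

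First I would make the identity $\rect|_{\gamma_\bullet(\g)}=\id$ explicit. By Theorem~\ref{thm:bandiera} the map $P_\bullet\colon\gamma_\bullet(\g)\to\MC(\g\otimes C_\bullet)$ is an isomorphism with two-sided inverse $I_\bullet$, so $I_\bullet P_\bullet(\alpha)=\alpha$ for every $\alpha\in\gamma_n(\g)$. Applying Proposition~\ref{prop:forElModel} to $\alpha\in\gamma_n(\g)\subseteq\MC_n(\g)$ then forces
$$\alpha=\rect(\alpha)=\beta_1(t_0,\ldots,t_n)+\cdots+\xi\otimes\omega_{0\ldots n},$$
where the omitted terms lie in $\g^{1-k}\otimes\Omega_n^k$ for $1\le k\le n-1$ and $\xi\in\g^{1-n}$. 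In particular the \emph{entire} component of $\alpha$ in $\g^{1-n}\otimes\Omega_n^n$ is the single term $\xi\otimes\omega_{0\ldots n}$, a $\g$-multiple of the one basic top form $\omega_{0\ldots n}$.

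Finally I would connect this to thinness by integrating over $\Delta^n$. Since $\int_{\Delta^n}$ annihilates every form of $\Omega_n$-degree strictly below $n$, only the top-degree part of $\alpha$ contributes, and using $\int_{\Delta^n}\omega_{0\ldots n}=1$ (the Remark following the definition of integration, with $p=n$ and $|I|=n+1$) we get
$$\int_{\Delta^n}\alpha=\xi\cdot\int_{\Delta^n}\omega_{0\ldots n}=\xi.$$
Thus $\alpha$ is thin exactly when $\xi=0$, which is precisely the vanishing of its $\g^{1-n}\otimes\Omega_n^n$ component, proving the lemma. There is no serious obstacle once Theorem~\ref{thm:bandiera} is in hand; the only point needing a little care is that the top-degree part of an element of $\gamma_n(\g)$ is a genuine scalar multiple of $\omega_{0\ldots n}$, so that ``thin'' and ``no $\g^{1-n}\otimes\Omega_n^n$ term'' coincide rather than one merely implying the other — and this is exactly what Proposition~\ref{prop:forElModel} guarantees.
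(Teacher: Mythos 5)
Your proposal is correct and follows essentially the same route as the paper: both deduce from Theorem~\ref{thm:bandiera} together with Proposition~\ref{prop:forElModel} that any $\alpha\in\gamma_n(\g)$ has top-degree part $\xi\otimes\omega_{0\ldots n}$, then integrate over $\Delta^n$ to identify thinness with $\xi=0$. Your explicit intermediate observation that $\rect$ restricts to the identity on $\gamma_\bullet(\g)$ is a nice way of making precise what the paper leaves implicit when it combines the two cited results.
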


\begin{proof}
	By Proposition \ref{prop:forElModel} and Theorem \ref{thm:bandiera}, we know that if $\alpha\in\gamma_n(\g)$, then $\alpha$ is of the form
	\[
	\alpha = \cdots + \xi\otimes\omega_{0\ldots n}
	\]
	for some $\xi\in\g^{1-n}$, where the dots indicate terms in $\g^{1-k}\otimes\Omega_n^k$ for $0\le k\le n-1$, which will give zero after integration. Integrating, we get
	\[
	\int_{\Delta^n}\alpha = \xi\otimes\int_{\Delta^n}\omega_{0\ldots n} = \xi\otimes1\ .
	\]
	Therefore, $\alpha$ is thin if, and only if $\xi=0$. 
\end{proof}

\section{The case of Lie algebras} \label{sect:LieAlgebras}

In this section, we focus on the case where $\g$ is actually a dg Lie algebra. In this situation, we are able to represent the functor $\MC(\g\otimes C_\bullet)$ by a cosimplicial dg Lie algebra. The main tools used here are results from the article \cite{rn17tensor}.

\subsection{Reminder on the complete cobar construction}

What we explain here is a special case of \cite[Ch. 11.1--3]{vallette12}, namely where we take $\P = \lie$ and only consider the canonical twisting morphism $\pi:\bar\lie\to\lie$, where $\bar\lie$ is the bar construction of the operad $\lie$ encoding Lie algebras. In fact, we consider a slight variation on the material presented there, as we remove the conilpotency condition on coalgebras but additionally add the requirement that algebras be complete. See also \cite[Sect. 6.2]{rn17tensor}.

\medskip

Let $X$ be a dg $\bar\lie$-coalgebra. The \emph{complete cobar construction} of $X$ is the complete dg Lie algebra
\[
\widehat{\Omega}_\pi X\coloneqq\left(\widehat{\lie}(X),d\coloneqq d_1 + d_2\right),
\]
where
\[
\widehat{\lie}(X)\coloneqq\prod_{n\ge1}\lie(n)\otimes_{\S_n}X^{\otimes n}
\]
and where the differential $d$ is composed by the following two parts:
\begin{enumerate}
	\item The differential $-d_1$ is the unique derivation extending the differential $d_X$ of $X$.
	\item The differential $-d_2$ is the unique derivation extending the composite
	\[
	X\xrightarrow{\Delta_X}\widehat{\bar\lie}(X)\xrightarrow{\pi\circ 1_X}\widehat{\lie}(X)\ .
	\]
	Notice that as $X$ is not assumed to be conilpotent, the decomposition map $\Delta_X$ really lands in the product
	\[
	\widehat{\bar\lie}(X)\coloneqq \prod_{n\ge0}\left(\bar\lie(n)\otimes X^{\otimes n}\right)^{\S_n}
	\]
	and not the direct sum. Thus it is necessary to consider the free complete Lie algebra over $X$. Also, there is a passage from invariants to coinvariants that is left implicit here, as the decomposition map lands in invariants, but the elements of the complete free Lie algebra $\widehat{\lie}(X)$ are coinvariants. This introduces factors of the form $\tfrac{1}{n!}$ when computing explicit formul{\ae} for $d_2$.
\end{enumerate}
The complete cobar construction $\widehat{\Omega}_\pi$ defines a functor from dg $\bar\lie$-coalgebras to complete dg Lie algebras.

\subsection{Representing $\MC(\g\otimes C_\bullet)$}

Using the Dupont contraction, the Homotopy Transfer Theorem gives the structure of a simplicial $\C_\infty$-algebra to $C_\bullet$. As the underlying cochain complex $C_n$ is finite dimensional for each $n$, it follows that its dual is a cosimplicial $\bar(\susp\otimes\lie)$-coalgebra. Therefore, the desuspension $sC_\bullet^\vee$ is a cosimplicial $\bar\lie$-coalgebra, and we can take its complete cobar construction.

\begin{definition}
	We denote this cosimplicial dg Lie algebra by $\mc_\bullet\coloneqq\widehat{\Omega}_\pi(sC_\bullet^\vee)$.
\end{definition}

\begin{theorem} \label{thm:cosimplicialModel}
	Let $\g$ be a complete dg Lie algebra. There is a canonical isomorphism
	\[
	\MC(\g\otimes C_\bullet)\cong\hom_{\mathsf{dgLie}}(\mc_\bullet,\g)\ .
	\]
	It is natural in $\g$.
\end{theorem}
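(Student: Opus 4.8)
The plan is to establish the isomorphism by unwinding both sides into the same linear-algebraic data, namely a degree-$1$ element of $\g \otimes sC_\bullet^\vee$ that squares to zero under the appropriate structure maps. The key input is the universal property of the complete cobar construction: for a dg $\bar\lie$-coalgebra $X$ and a complete dg Lie algebra $\g$, morphisms of dg Lie algebras $\widehat{\Omega}_\pi X \to \g$ should be in natural bijection with Maurer--Cartan elements of the convolution Lie algebra $\hom(X, \g)$, or equivalently with twisting morphisms from $X$ to $\g$. This is the Lie-algebra analogue of the classical bar-cobar adjunction, adapted to the complete/non-conilpotent setting described in the reminder, and I would cite the corresponding result from \cite{rn17tensor} (Sect. 6.2) as the main tool. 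Since $\mc_\bullet = \Omega_\pi(sC_\bullet^\vee)$, this universal property gives
$$\hom_{\mathsf{dgLie}}(\mc_\bullet, \g) \cong \MC\big(\hom(sC_\bullet^\vee, \g)\big)$$
cosimplicially, so the entire theorem reduces to identifying the convolution dg Lie algebra $\hom(sC_\bullet^\vee, \g)$ with $\g \otimes C_\bullet$ carrying its transferred $\L_\infty$-structure.

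First I would make the dualization explicit. Because each $C_n$ is finite dimensional, there is a natural isomorphism $\hom(sC_n^\vee, \g) \cong \g \otimes C_n$ of graded vector spaces (after accounting for the suspension), and this assembles into a cosimplicial isomorphism. The substantive point is that the dg Lie (more precisely, the Maurer--Cartan-controlling) structure matches on the nose. The $\C_\infty$-structure transferred onto $C_\bullet$ dualizes to the $\bar(\susp\otimes\lie)$-structure on $C_\bullet^\vee$, which after desuspension is exactly the $\bar\lie$-coalgebra $sC_\bullet^\vee$ whose cobar construction defines $\mc_\bullet$. On the other side, tensoring $\g$ (a $\lie$-algebra) with the $\C_\infty$-coalgebra structure produces precisely the transferred $\L_\infty$-structure on $\g \otimes C_\bullet$ via the operadic tensor product of a $\com$-type structure with a $\lie$-structure. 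I would invoke the results of \cite{rn17tensor} on compatibility of the tensor product with bar-cobar duality to conclude that the convolution $\L_\infty$-algebra $\hom(sC_\bullet^\vee, \g)$ is isomorphic, as an $\L_\infty$-algebra, to $\g \otimes C_\bullet$ with its Homotopy-Transfer-Theorem structure. Under this identification the two notions of Maurer--Cartan element coincide.

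Naturality in $\g$ is then formal: every map in the chain is induced by a strict morphism of dg Lie algebras $\g \to \g'$ acting on the second tensor factor (respectively by post-composition in the convolution algebra), and the universal property of $\widehat{\Omega}_\pi$ is itself natural, so the composite isomorphism is natural as claimed. Finally, I would check cosimpliciality, i.e. that the isomorphism commutes with the coface and codegeneracy maps; this follows because all the structural isomorphisms above are induced levelwise by the simplicial maps of the Dupont contraction, which are compatible with the transferred structures by functoriality of the Homotopy Transfer Theorem.

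The main obstacle I anticipate is the careful bookkeeping of suspensions, signs, and the invariants-versus-coinvariants passage flagged in the reminder, which introduces the $\tfrac{1}{n!}$ factors; these must be tracked precisely to guarantee that the Maurer--Cartan \emph{equations} on both sides agree and not merely the underlying vector spaces. The conceptual content is entirely contained in the universal property of the complete cobar construction together with the tensor-product compatibility theorems of \cite{rn17tensor}; the difficulty is ensuring that the duality $\hom(sC_\bullet^\vee, \g) \cong \g \otimes C_\bullet$ genuinely intertwines the convolution structure with the transferred $\L_\infty$-structure, rather than merely a structure isomorphic to it up to a rescaling that could distort the Maurer--Cartan locus.
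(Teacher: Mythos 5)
Your proposal is correct and follows essentially the same route as the paper, which simply cites \cite{rn17tensor}: first identifying the transferred $\L_\infty$-structure on $\g\otimes C_\bullet$ with the operadic tensor product of the dg Lie algebra $\g$ and the $\C_\infty$-algebra $C_\bullet$ (Theorems 5.1 and 3.4 of \cite{rn17tensor}), and then invoking the cobar--Maurer--Cartan adjunction in the complete setting (Corollary 6.6 of \cite{rn17tensor}). Your unpacking of that corollary via the convolution algebra $\hom(sC_\bullet^\vee,\g)\cong\g\otimes C_\bullet$ is exactly the content of the cited result, so the two arguments coincide in substance.
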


\begin{proof}
	By \cite[Th. 5.1]{rn17tensor}, the $\L_\infty$-algebra structure we have on $\g\otimes C_\bullet$ is the same as the structure that we obtain on the tensor product of the dg Lie algebra $\g$ with the simplicial $\C_\infty$-algebra $C_\bullet$ by using \cite[Th. 3.4]{rn17tensor} with $\P=\Q=\lie$ and $\Psi=\id_\lie$. Therefore, we can apply \cite[Cor. 6.6]{rn17tensor}, which gives the desired isomorphism.
\end{proof}

With this form for $\MC(\g\otimes C_\bullet)$, Theorem \ref{thm:mainThm} reads as follows.

\begin{corollary} \label{cor:exMainThm}
	Let $\g$ be a complete dg Lie algebra. There is a weak equivalence of simplicial sets
	\[
	\MC_\bullet(\g)\simeq\hom_{\mathsf{dgLie}}(\mc_\bullet,\g)\ ,
	\]
	natural in $\g$.
\end{corollary}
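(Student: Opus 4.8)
The plan is to chain together the two principal results already established, since the corollary is exactly their composition in the special case of complete dg Lie algebras. First I would note that a complete dg Lie algebra is in particular a complete $\L_\infty$-algebra (namely one whose higher brackets $\ell_n$ for $n\ge3$ vanish). Hence Theorem \ref{thm:mainThm} applies directly and supplies a natural weak equivalence of simplicial sets
$$\MC_\bullet(\g)\simeq\MC(\g\otimes C_\bullet)\ .$$
Next, because $\g$ is now assumed to be a genuine dg Lie algebra rather than merely an $\L_\infty$-algebra, I would invoke Theorem \ref{thm:cosimplicialModel}, which provides the natural isomorphism
$$\MC(\g\otimes C_\bullet)\cong\hom_{\mathsf{dgLie}}(\mc_\bullet,\g)\ .$$

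The final step is to compose these two morphisms. Since every isomorphism is in particular a weak equivalence, and weak equivalences are closed under composition, the composite is a weak equivalence
$$\MC_\bullet(\g)\simeq\hom_{\mathsf{dgLie}}(\mc_\bullet,\g)\ .$$
Naturality of the composite in $\g$ follows at once from the naturality of each of the two constituents, asserted respectively in Theorem \ref{thm:mainThm} and Theorem \ref{thm:cosimplicialModel}.

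There is essentially no obstacle to overcome here: all the genuine mathematical content resides in the two cited theorems, and the only point to verify is that the target of the weak equivalence from Theorem \ref{thm:mainThm} coincides with the source of the isomorphism from Theorem \ref{thm:cosimplicialModel}, which is immediate since both are the simplicial set $\MC(\g\otimes C_\bullet)$. The statement is therefore a formal consequence, and I would keep the proof to a single short paragraph recording this composition.
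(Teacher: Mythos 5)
Your proof is correct and follows exactly the paper's route: the corollary is presented there as an immediate combination of Theorem \ref{thm:mainThm} (the weak equivalence $\MC_\bullet(\g)\simeq\MC(\g\otimes C_\bullet)$) with the natural isomorphism of Theorem \ref{thm:cosimplicialModel}. Nothing is missing.
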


We can completely characterize the first levels of the cosimplicial dg Lie algebra $\mc_\bullet$. Recall from  \cite{lawrence10} the Lawrence--Sullivan algebra: it is the unique free complete dg Lie algebra generated by two Maurer--Cartan elements in degree $1$ and a single element in degree $0$ such that the element in degree $0$ is a gauge between the two generating Maurer--Cartan elements.

\begin{proposition}
	The first two levels of the cosimplicial dg Lie algebra $\mc_\bullet$ are as follows.
	\begin{enumerate}
		\item \label{n1} The dg Lie algebra $\mc_0$ is isomorphic to the free dg Lie algebra with a single Maurer--Cartan element as the only generator.
		\item \label{n2} The dg Lie algebra $\mc_1$ is isomorphic to the Lawrence--Sullivan algebra.
	\end{enumerate}
\end{proposition}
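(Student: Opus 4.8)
The plan is to read the two dg Lie algebras off the construction $\mc_n=\Omega_\pi(sC_n^\vee)$ in two steps: first identify the underlying free complete graded Lie algebra from the graded dimensions of $C_n$, then match the cobar differential $d=d_1+d_2$ on the generators with the differentials of the two target objects, using the (co)representability provided by Theorem~\ref{thm:cosimplicialModel} to sidestep the heaviest computations. For the graded data, I would record that $C_0=\k$ is concentrated in degree $0$ (the only basic form being $\omega_{\{0\}}=t_0=1$), while $C_1$ is three-dimensional, with $\omega_{\{0\}}=t_0$ and $\omega_{\{1\}}=t_1$ in degree $0$ and the edge form $\omega_{\{0,1\}}$ in degree $1$. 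Since $\mc_n=\widehat{\lie}(sC_n^\vee)$ is, as a graded Lie algebra, free complete on $sC_n^\vee$, this gives at once that $\mc_0$ is free complete on a single degree-$1$ generator $\tau_0$, and that $\mc_1$ is free complete on two degree-$1$ generators $\tau_0,\tau_1$ (dual to the two vertices) and one degree-$0$ generator $\tau_{01}$ (dual to the edge). These already coincide with the underlying graded Lie algebras of the two target objects.

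To settle (\ref{n1}), the cleanest route is representability. By Theorem~\ref{thm:cosimplicialModel} we have a natural isomorphism $\hom_{\mathsf{dgLie}}(\mc_0,\g)\cong\MC(\g\otimes C_0)=\MC(\g)$, and by its defining universal property the free complete dg Lie algebra on one Maurer--Cartan generator co-represents the same functor $\MC$; the Yoneda lemma in complete dg Lie algebras then forces $\mc_0$ to be that algebra. One can also see this by hand: on $C_0=\Omega_0=\k$ the Dupont contraction is the identity, so the only transferred product is $\omega_{\{0\}}^2=\omega_{\{0\}}$ and $d_2\tau_0=-\tfrac12[\tau_0,\tau_0]$ with no higher terms, which is exactly the Maurer--Cartan differential.

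For (\ref{n2}) I would compute $d=d_1+d_2$ on the three generators of $\mc_1$. The linear part $d_1$ is dual to the internal differential of $C_1$, which sends $\omega_{\{0\}}\mapsto-\omega_{\{0,1\}}$ and $\omega_{\{1\}}\mapsto\omega_{\{0,1\}}$; hence $d_1\tau_0=d_1\tau_1=0$ and $d_1\tau_{01}=\tau_1-\tau_0$ (up to the chosen signs), recording the two endpoints. For the higher part, a degree count is decisive for $\tau_0,\tau_1$: a transferred product $m_k$ has degree $2-k$, so with the degree-$0$ inputs $\omega_{\{0\}},\omega_{\{1\}}$ any $m_k$ with $k\ge 3$ lands below degree $0$ and vanishes, leaving only $m_2(\omega_{\{i\}},\omega_{\{i\}})=\omega_{\{i\}}$ (from $p_1(t_i^2)=t_i$ and $p_1(t_0t_1)=0$). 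Thus $d\tau_0=-\tfrac12[\tau_0,\tau_0]$ and $d\tau_1=-\tfrac12[\tau_1,\tau_1]$, so $\tau_0,\tau_1$ are Maurer--Cartan. The remaining differential $d\tau_{01}$ collects the duals of the products $m_k(\omega_{\{i\}},\omega_{\{0,1\}},\ldots,\omega_{\{0,1\}})$, and one checks these assemble into a gauge from $\tau_0$ to $\tau_1$; the uniqueness clause in the definition of the Lawrence--Sullivan algebra then identifies $\mc_1$ with it. As in (\ref{n1}), this can instead be phrased through Theorem~\ref{thm:cosimplicialModel}: decomposing the Maurer--Cartan equation in $\g\otimes C_1$ along the orthogonal idempotents $\omega_{\{0\}},\omega_{\{1\}}$ exhibits $\hom_{\mathsf{dgLie}}(\mc_1,\g)\cong\MC(\g\otimes C_1)$ as the natural set of triples $(\alpha_0,\alpha_1,\lambda)$ with $\alpha_0,\alpha_1\in\MC(\g)$ and $\lambda$ a gauge between them, which is precisely $\hom_{\mathsf{dgLie}}$ out of the Lawrence--Sullivan algebra, and Yoneda concludes.

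The main obstacle is the explicit form of $d\tau_{01}$. Because the Dupont contraction produces a genuinely nontrivial $\C_\infty$-structure, $\tau_{01}$ receives contributions from $m_k(\omega_{\{i\}},\omega_{\{0,1\}},\ldots,\omega_{\{0,1\}})$ for every $k$ (the naive vanishing $\omega_{\{0,1\}}^2=0$ in $\Omega_1$ is broken by the interleaved homotopy $h_1$), so matching $d\tau_{01}$ term by term with the Lawrence--Sullivan differential amounts to reproducing its Bernoulli-number coefficients and checking convergence in the complete Lie algebra. This is exactly the point where I would rely on the representability variant above, which reduces the whole proposition to the elementary decomposition of the Maurer--Cartan equation over the idempotents of $C_1$ together with the universal properties of the two target algebras.
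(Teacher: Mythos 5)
Your identification of the underlying free complete graded Lie algebras and your treatment of $\mc_0$ are correct and essentially coincide with the paper's argument: the paper likewise reads off $\mc_0=\widehat{\lie}(s\k)$ and $\mc_1=\widehat{\lie}(\alpha_0,\alpha_1,\lambda)$ from the basic forms and then argues through the functors these algebras represent. Your degree count showing that the two vertex generators of $\mc_1$ are Maurer--Cartan is also sound.

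The gap is in part (\ref{n2}), and it sits exactly at the step you flag and then claim to sidestep. Decomposing the Maurer--Cartan equation of $x=\alpha_0\otimes\omega_0+\alpha_1\otimes\omega_1+\lambda\otimes\omega_{01}$ in $\g\otimes C_1$ over the idempotents $\omega_0,\omega_1$ only handles the two vertex components; the component along $\omega_{01}$ is a single equation $E(\alpha_0,\alpha_1,\lambda)=0$ assembled from \emph{all} the transferred operations (equivalently from the full $\C_\infty$-structure on $C_1$, Bernoulli coefficients included), and nothing in the decomposition identifies $E=0$ with the statement that $\lambda$ is a gauge from $\alpha_0$ to $\alpha_1$. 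Without that identification you only know that $\mc_1$ is free complete on two Maurer--Cartan generators and a degree-$0$ generator subject to \emph{some} closed condition, which is not enough to invoke the uniqueness clause in the definition of the Lawrence--Sullivan algebra; so neither of your two routes for (\ref{n2}) actually closes. The paper closes it with one extra idea: apply the $\infty$-morphism $I_1$ (injective since $P_1I_1=\id$ by Lemma \ref{lemma:PIis1}) to transport the element to $\MC_1(\g)=\MC(\g\otimes\Omega_1)$, where by Proposition \ref{prop:forElModel} it takes the form $a(t_0,t_1)+\phi(\lambda)\otimes\omega_{01}$ with $a$ interpolating $\phi(\alpha_0)$ and $\phi(\alpha_1)$ at the vertices; there the Maurer--Cartan equation is the classical flow equation, i.e.\ literally the definition of gauge equivalence. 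If you insist on staying inside $\g\otimes C_1$, you must actually compute $E$ and match it with the Lawrence--Sullivan differential (the paper's remark points to \cite{cheng08} for this), which is precisely the computation your proposal was designed to avoid.
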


\begin{proof}
	For (\ref{n1}), we have $\Omega_0\cong\k\cong C_0$, both $p_0$ and $i_0$ are the identity, and $h_0 = 0$. It follows that, as a complete graded free Lie algebra, $\mc_0$ is given by
	\[
	\mc_0 = \widehat{\lie}(s\k)\ .
	\]
	We denote the generator by $\alpha\coloneqq s1^\vee$. It has degree $1$. Let $\g$ be any complete dg Lie algebra, then a morphism
	\[
	\phi:\mc_0\longrightarrow\g
	\]
	is equivalent to the Maurer--Cartan element
	\[
	\phi(\alpha)\otimes 1\in\MC(\g\otimes C_\bullet) \cong \MC(\g)\ .
	\]
	Conversely, through $P_0$ every Maurer--Cartan element of $\g$ induces a morphism $\mc_0\to\g$. As this is true for any dg Lie algebra $\g$, it follows that $\alpha$ is a Maurer--Cartan element.
	
	\medskip
	
	To prove (\ref{n2}), we start by noticing that
	\[
	C_1\coloneqq \k\omega_0\oplus\k\omega_1\oplus\k\omega_{01}
	\]
	with $\omega_0,\omega_1$ of degree $0$ and $\omega_{01}$ of degree $1$. Denoting by $\alpha_i\coloneqq s\omega_i^\vee$ and by $\lambda\coloneqq s\omega_{01}^\vee$, we have
	\[
	\mc_1 = \widehat{\lie}(\alpha_0,\alpha_1,\lambda)
	\]
	as a graded Lie algebra. Let $\g$ be any dg Lie algebra, then a morphism
	\[
	\phi:\mc_1\longrightarrow\g
	\]
	is equivalent to a Maurer--Cartan element
	\[
	\phi(\alpha_0)\otimes\omega_0 + \phi(\alpha_1)\otimes\omega_1 + \phi(\lambda)\otimes\omega_{01}\in\MC(\g\otimes C_1)\ ,
	\]
	see \cite[Sect. 6.3--4]{rn17tensor}. Applying $I_1$, as in the proof of Proposition \ref{prop:forElModel} we obtain
	\[
	I_1(\phi(\alpha_0)\otimes\omega_0 + \phi(\alpha_1)\otimes\omega_1 + \phi(\lambda)\otimes\omega_{01}) = a(t_0,t_1) + \phi(\lambda)\otimes\omega_{01}\in\MC_1(\g)
	\]
	with $a(1,0) = \phi(\alpha_0)$ and $a(0,1) = \phi(\alpha_1)$. The Maurer--Cartan equation for $a(t_0,t_1) + \phi(\lambda)\otimes\omega_{01}$ then shows that $\phi(\lambda)$ is a gauge from $\phi(\alpha_0)$ to $\phi(\alpha_1)$. Conversely, if we are given the data of two Maurer--Cartan elements of $\g$ and a gauge equivalence between them, then this data gives us a Maurer--Cartan element of $\g\otimes\Omega_1$. Applying $P_1$ then gives back a non-trivial morphism $\mc_1\to\g$. As this is true for any $\g$, it follows that $\mc_1$ is isomorphic to the Lawrence--Sullivan algebra.
\end{proof}

\begin{remark}
	Alternatively, one could write down explicitly the differentials for both $\mc_0$ (which is straightforward) and $\mc_1$ (with the help of \cite[Prop. 19]{cheng08}). An explicit description of $\mc_\bullet$ is made difficult by the fact that one needs to know the whole $\C_\infty$-algebra structure on $C_\bullet$ in order to write down a formula for the differential.
\end{remark}

\subsection{Relations to rational homotopy theory}

The cosimplicial dg Lie algebra $\mc_\bullet$ has already made its appearance in the literature not long ago, in the paper \cite{buijs15}, in the context of rational homotopy theory, where it plays the role of a Lie model for the geometric $n$-simplex. With the goal of simplifying comparison and interaction between our work and theirs, we provide here a short review and a dictionary between our vocabulary and the notation used in \emph{op. cit.}.

\medskip

\begin{center}
	\begin{tabular}{|l|l|}
	\hline
	Notation of this paper & Notation of \cite{buijs15}\\
	\hline
	$\mc_\bullet$ & $\mathfrak{L}_\bullet$ or $\mathfrak{L}_{\Delta^\bullet}$\\
	$\Omega_\bullet$ & $A_{PL}(\Delta^\bullet)$\\
	$\bar_\iota$ & Quillen functor $\mathcal{C}$\\
	$\hom_{\mathsf{dgLie}}(\mc_\bullet,-)$ & $\langle-\rangle$\\
	$\hom_{\mathsf{dgCom}}(-,\Omega_\bullet)$ & $\langle -\rangle_S$\\
	\hline
	\end{tabular}
\end{center}

\medskip

\begin{remark}
	The fact that the cosimplicial dg Lie algebra $\mc_\bullet$ is isomorphic to $\mathfrak{L}_\bullet$ is immediate from \cite[Def. 2.1 and Thm. 2.8]{buijs15}.
\end{remark}

The following theorem has non-empty intersection with our results. We say a dg Lie algebra is of \emph{finite type} if it is finite dimensional in every degree and if its degrees are bounded either above or below.

\begin{theorem}[{\cite[Th. 8.1]{buijs15}}]
	Let $\g$ be a dg Lie algebra of finite type with $H^n(\g,d) = 0$ for all $n>0$. Then there is a homotopy equivalence of simplicial sets
	\[
	\hom_{\mathsf{dgLie}}(\mc_\bullet,\g)\simeq\hom_{\mathsf{dgCom}}(\bar_\iota(s\g)^\vee,\Omega_\bullet)\ .
	\]
\end{theorem}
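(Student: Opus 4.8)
The plan is to rewrite both sides of the claimed equivalence as Maurer--Cartan simplicial sets and then reduce the statement to the main theorem of this paper.

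First I would handle the right-hand side by the classical Chevalley--Eilenberg/Quillen duality. For a dg Lie algebra $\g$ of finite type the cdga $\bar_\iota(s\g)^\vee$ is quasi-free, with space of generators the degreewise dual of $s\g$; a morphism of commutative dg algebras $\bar_\iota(s\g)^\vee\to A$ is determined by its restriction to the generators, and compatibility with the differential on generators is precisely the Maurer--Cartan equation in $\g\otimes A$. The finite-type hypothesis is exactly what makes this dualization degreewise exact. Applying it with $A=\Omega_n$ for every $n$ yields a natural isomorphism of simplicial sets
$$\hom_{\mathsf{dgCom}}(\bar_\iota(s\g)^\vee,\Omega_\bullet)\cong\MC(\g\otimes\Omega_\bullet)=\MC_\bullet(\g)\ .$$
It therefore remains to compare the two realizations $\hom_{\mathsf{dgLie}}(\mc_\bullet,\g)$ and $\MC_\bullet(\g)$.

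Next I would treat the left-hand side. By Theorem \ref{thm:cosimplicialModel} (the cobar/twisting-morphism adjunction together with the tensor results of \cite{rn17tensor}) there is a natural isomorphism $\hom_{\mathsf{dgLie}}(\mc_\bullet,\g)\cong\MC(\g\otimes C_\bullet)$, where $\g\otimes C_\bullet$ carries the simplicial $\L_\infty$-structure transferred along the Dupont contraction. Combining the two reductions, the theorem becomes the assertion that the maps $I_\bullet$ and $P_\bullet$ induced by the Dupont contraction provide a weak equivalence $\MC(\g\otimes C_\bullet)\simeq\MC(\g\otimes\Omega_\bullet)$, natural in $\g$. This is exactly the content of Theorem \ref{thm:mainThm}; in particular, when $\g$ is complete the result is immediate from Corollary \ref{cor:exMainThm}, so the genuinely new case is that of a non-complete $\g$ satisfying the finite-type and acyclicity hypotheses.

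The main obstacle is precisely the absence of completeness, and this is where I expect both standing hypotheses to enter. On one hand, the finite-type hypothesis (boundedness of the degrees of $\g$) forces the transferred brackets $\ell_k$ on $\g\otimes C_n$ to vanish for $k$ large: the $C_n$-component of $\ell_k$ raises form-degree, which is bounded by $n$, while the $\g$-degrees are bounded, so the Maurer--Cartan equation is a finite sum and $\MC(\g\otimes C_\bullet)$ is well defined even though $\g$ is not complete. On the other hand, the condition $H^n(\g)=0$ for $n>0$ is what places us in the range of validity of the Quillen--Sullivan comparison: it makes $\bar_\iota(s\g)^\vee$ suitably connected, so that its Bousfield--Gugenheim realization is homotopically meaningful, and it is what should allow the tower argument of this paper to run without completeness. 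Concretely, I would re-examine the inductive scheme of Lemma \ref{lemma:inductionStep}: the five-lemma applied to the long exact sequences of the fibrations of Theorem \ref{thm:Hinich} attached to the tower $\{\g^{(k)}\}$, with the abelian base case supplied by Lemma \ref{lemma:abelianDGL}, shows that $I_\bullet$ and $P_\bullet$ are mutually inverse in homotopy. The delicate point, and the step I expect to require real work, is checking that under $H^{>0}(\g)=0$ this tower and its obstruction classes behave well enough that the limit still computes the realization, compensating for the failure of $\g$ to be the limit of its own quotients. Since both realizations are Kan complexes, the resulting weak equivalence is automatically a homotopy equivalence.
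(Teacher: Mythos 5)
Your two reductions are exactly the ones the paper itself uses --- not for this theorem, which it imports verbatim from \cite[Th.~8.1]{buijs15} without proof, but for the Proposition immediately following it, which is proved by the chain $\hom_{\mathsf{dgCom}}(\bar_\iota(s\g)^\vee,\Omega_\bullet)\cong\MC(\g\otimes\Omega_\bullet)\simeq\hom_{\mathsf{dgLie}}(\mc_\bullet,\g)$, i.e.\ precisely your dualization of the Chevalley--Eilenberg side plus Theorem~\ref{thm:cosimplicialModel} plus Corollary~\ref{cor:exMainThm}. So as far as the complete case goes, your argument coincides with the paper's. The paper deliberately trades hypotheses: it keeps completeness and finite type but drops the condition $H^{>0}(\g)=0$, whereas the statement you were asked to prove drops completeness and keeps the cohomological condition.

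The genuine gap is the one you flag yourself but do not close: every ingredient you invoke (Theorem~\ref{thm:cosimplicialModel}, Theorem~\ref{thm:mainThm}, the tower argument of Lemma~\ref{lemma:inductionStep}) is stated and proved only for \emph{complete} $\L_\infty$-algebras, and your sketch of why finite type rescues the non-complete case does not hold up. The transferred bracket $\ell_k$ on $\g\otimes C_n$ lowers form-degree by $k-2$ rather than raising it, and on a Maurer--Cartan element $x=\sum_j x_j$ with $x_j\in\g^{1-j}\otimes C_n^j$ the terms $\ell_k(x,\ldots,x)$ need not vanish for large $k$ under a mere finite-type hypothesis (which only bounds the degrees of $\g$ on one side); likewise $\mc_n$ is a \emph{complete} free Lie algebra whose differential sends a generator to an infinite series of brackets, so $\hom_{\mathsf{dgLie}}(\mc_n,\g)$ is itself delicate for non-complete $\g$. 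The whole tower machinery of Section~\ref{sect:mainThm} rests on $\g\cong\varprojlim_k\g^{(k)}$, which is exactly what fails here, and the hypothesis $H^{>0}(\g)=0$ enters the proof in \cite{buijs15} through rational homotopy theory (Quillen--Sullivan comparison) rather than through any obstruction-theoretic repair of the tower. In short: your proposal correctly reduces the theorem to the complete case handled by this paper, but the extension to the stated hypotheses is asserted, not proved, and cannot be obtained by the paper's methods alone.
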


We can easily recover an analogous result, which works on complete dg Lie algebras of finite type such that $\g^{-1}=0$, but without restrictions on the cohomology, using our main theorem and some results of \cite{rn17tensor}.

\begin{proposition}
	Let $\g$ be a complete dg Lie algebra of finite type such that $\g^{-1}=0$. Then there is a weak equivalence of simplicial sets
	\[
	\hom_{\mathsf{dgLie}}(\mc_\bullet,\g)\simeq\hom_{\mathsf{dgCom}}(\bar_\iota(s\g)^\vee,\Omega_\bullet)\ .
	\]
\end{proposition}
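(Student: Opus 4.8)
The plan is to combine Corollary \ref{cor:exMainThm} with the Chevalley--Eilenberg representability of the Maurer--Cartan functor. By Corollary \ref{cor:exMainThm} together with the definition $\MC_\bullet(\g)=\MC(\g\otimes\Omega_\bullet)$, there is a natural weak equivalence
$$\hom_{\mathsf{dgLie}}(\mc_\bullet,\g)\simeq\MC(\g\otimes\Omega_\bullet)\ .$$
It therefore suffices to exhibit a natural isomorphism of simplicial sets
$$\MC(\g\otimes\Omega_\bullet)\cong\hom_{\mathsf{dgCom}}(\bar_\iota(s\g)^\vee,\Omega_\bullet)$$
and to compose the two.

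The isomorphism I would establish one simplicial degree at a time: for a fixed commutative dg algebra $A$ I would prove
$$\MC(\g\otimes A)\cong\hom_{\mathsf{dgCom}}(\bar_\iota(s\g)^\vee,A)\ ,$$
naturally in $A$. This is the mirror of Theorem \ref{thm:cosimplicialModel}. There one keeps $\g$ arbitrary and dualizes the finite-dimensional commutative factor $C_\bullet$, producing the representing dg Lie algebra $\mc_\bullet=\Omega_\pi(sC_\bullet^\vee)$; here one instead keeps $A=\Omega_\bullet$ arbitrary (and, crucially, infinite-dimensional) and dualizes the Lie factor, which is permitted precisely because $\g$ is of finite type. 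Concretely, $\bar_\iota(s\g)^\vee$ is the Chevalley--Eilenberg cochain cdga of $\g$, and a morphism of commutative dg algebras $\bar_\iota(s\g)^\vee\to A$ is the same datum as a degree $1$ element of $\g\otimes A$ whose compatibility with the Chevalley--Eilenberg differential is exactly the Maurer--Cartan equation. This is the content of \cite[Cor. 6.6]{rn17tensor} run with the roles of the Koszul-dual operads $\lie$ and $\com$ interchanged. Applying it with $A=\Omega_\bullet$ level by level yields the desired isomorphism of simplicial sets, and composing with the weak equivalence from Corollary \ref{cor:exMainThm} gives the statement.

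The main obstacle is the dual representability isomorphism itself, and in particular the bookkeeping needed to dualize only the Lie variable. One must verify that the finite-type hypothesis on $\g$ makes $\bar_\iota(s\g)$ degreewise finite-dimensional (this is where the boundedness of the degrees of $\g$ is used), so that its linear dual is genuinely a commutative dg algebra and the bar--cobar pairing identifying $\MC(\g\otimes A)$ with $\mathsf{dgCom}$-morphisms into $A$ is non-degenerate, with no finiteness imposed on $A$ itself. I expect the finiteness and degree bookkeeping to be the only delicate point; once the isomorphism $\MC(\g\otimes A)\cong\hom_{\mathsf{dgCom}}(\bar_\iota(s\g)^\vee,A)$ is in place, the remainder is a formal composition, and the resulting weak equivalence is natural in $\g$ since both ingredients are.
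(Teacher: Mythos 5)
Your proposal takes essentially the same route as the paper: Corollary \ref{cor:exMainThm} composed with the representability isomorphism $\MC(\g\otimes A)\cong\hom_{\mathsf{dgCom}}(\bar_\iota(s\g)^\vee,A)$, which the paper obtains by rewriting $\bar_\iota(s\g)^\vee\cong\widehat{\Omega}_\pi(s^{-1}\g^\vee)$ and invoking the $\com$-version of \cite[Cor.~6.6]{rn17tensor}. The only cosmetic difference is in the finiteness bookkeeping you flag as delicate: the paper dualizes $\g$ first (finite type makes $\g^\vee$ a $\lie^\vee$-coalgebra) and takes the completed cobar construction of the dual, rather than arguing that $\bar_\iota(s\g)$ itself is degreewise finite-dimensional.
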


\begin{proof}
	The proof is given by the sequence of equivalences
	\begin{align*}
		\hom_{\mathsf{dgCom}}(\bar_\iota(s\g)^\vee,\Omega_\bullet)\cong&\ \hom_{\mathsf{dgCom}}\left(\widehat{\Omega}_\pi(s^{-1}\g^\vee),\Omega_\bullet\right)\\
		\cong&\ \MC(\g\otimes\Omega_\bullet)\\
		\simeq&\ \hom_{\mathsf{dgLie}}(\mc_\bullet,\g).
	\end{align*}
	In the first line we used the natural isomorphism
	\[
	\bar_\iota(s\g)^\vee\cong\widehat{\Omega}_\pi(s^{-1}\g^\vee)\ .
	\]
	Notice that the assumptions on $\g$ make it so that $\g^\vee$ is a $\lie^\vee$-coalgebra. In the second line we used a slight generalization of \cite[Cor. 6.6]{rn17tensor} for $\Q=\P=\com$ and $\Psi$ the identity morphism of $\com$. Notice that here the assumption that $\g^{-1}=0$ makes it so that
	\[
	\hom_{\mathsf{dgCom}}\left(\widehat{\Omega}_\pi(s^{-1}\g^\vee),\Omega_\bullet\right)\cong\hom(s^{-1}\g^\vee,\Omega_\bullet)^0
	\]
	even though $\Omega_\bullet$ is not complete. Finally, in the third line we used our Corollary \ref{cor:exMainThm}.
\end{proof}

\bibliographystyle{alpha}
\bibliography{Representing_the_Deligne-Hinich-Getzler_infinity-groupoid}

\end{document}